\newtheorem{thm}{Theorem}[section]
\newtheorem{lemma}{Lemma}[section]
\newtheorem{prop}{Proposition}[section]
\newtheorem{rmk}{Remark}[section]
\theoremstyle{definition}
\numberwithin{equation}{section}
\newcommand{\al}{\alpha}
\newcommand{\tilmu}{{\mu}}
\newcommand{\tilmm}{{{m}}}
\newcommand{\tilrr}{{{r}}}
\newcommand{\lm}{\lambda}
\newcommand{\ts}{\sigma}
 \newcommand{\eps}{\epsilon}
\newcommand{\Pda}{\mathcal P(d\alpha)}
\newcommand{\calP}{\mathcal P}
\newcommand{\fip}{\varphi}
\newcommand{\ee}{\varepsilon}
\newcommand{\Om}{\Omega}
\newcommand{\R}{{\mathbb R}}
\newcommand{\Rd}{\mathbb{R}^2}
\newcommand{\un}{u_n}
\newcommand{\bal}{\gamma}
\newcommand{\barlm}{\lm_{\bal,\ts}}
\newcommand{\cm}{\overline x_\mu}
\begin{document}
\title[A mean field equation with variable intensities]{On the existence and blow-up of solutions for a mean field equation with variable intensities}
\thanks{
${}^*$Corresponding author.\\
This research is partially supported by \textit{Programma di scambi internazionali con universit\`{a} 
ed istituti stranieri per la mobilit\`{a} breve di docenti, ricercatori e studiosi} of Universit\`{a} di Napoli Federico II
and by 
Progetto di ricerca GNAMPA 2015 \textit{Alcuni aspetti di equazioni ellittiche non lineari}}
\author{T.~Ricciardi${}^*$}
\address[Tonia Ricciardi] {Dipartimento di Matematica e Applicazioni, 
Universit\`{a} di Napoli Federico II, Via Cintia, Monte S.~Angelo, 80126 Napoli, Italy}
\email{tonricci@unina.it}
\author{R.~Takahashi}
\address[Ryo Takahashi] {
Division of Mathematical Science, Department of Systems Innovation, Graduate School of Engineering Science, Osaka University, 
Machikaneyama-cho 1-3, Toyonaka-shi, 560-8531, Japan
}
\email{r-takaha@sigmath.es.osaka-u.ac.jp}
\author{G.~Zecca}
\address[Gabriella Zecca] {Dipartimento di Matematica e Applicazioni, 
Universit\`{a} di Napoli Federico II, Via Cintia, Monte S.~Angelo, 80126 Napoli, Italy}
\email{g.zecca@unina.it}
\author{X.~Zhang}
\address[Xiao Zhang]  {
Division of Mathematical Science, Department of Systems Innovation, Graduate School of Engineering Science, Osaka University, 
Machikaneyama-cho 1-3, Toyonaka-shi, 560-8531, Japan
}
\email{zhangx@sigmath.es.osaka-u.ac.jp}
\begin{abstract}
We study an elliptic problem with exponential nonlinearities describing
the statistical mechanics equilibrium of point vortices with variable intensities.
For suitable values of the physical parameters we exclude the existence of blow-up
points on the boundary, we prove a mass quantization property and
we apply our analysis to the construction of minimax solutions.
\bigskip
\par
\noindent
\textit{2010 Mathematics Subject Classification.} 35J91, 35B44, 35J20
\par
\noindent
\textit{Key words and phrases.} Mean field equation, blow-up solutions, turbulent Euler flow.
\end{abstract}
\date{September 16, 2015}
\maketitle
\section{Introduction and main result}
Motivated by the theory of hydrodynamic turbulence as developed by Onsager~\cite{ES, On},
we consider the problem:
\begin{equation}
\label{Intro:specialpb}
\left\{
\begin{aligned}
-\Delta u=&\lm\left(\frac{e^u}{\int_\Omega e^u}+ \ts \bal\frac{e^{\bal u}}{\int_\Omega e^{\bal u}}\right)
&&\hbox{in\ }\Omega\\
u=&0&&\hbox{on\ }\partial\Omega,
\end{aligned}
\right.
\end{equation}
where $\lambda, \ts>0$, $\bal\in[-1,1)$ and $\Omega\subset\R^2$ is a smooth bounded domain.
Problem~\eqref{Intro:specialpb} is derived by statistical mechanics arguments under a ``deterministic" assumption on the
point vortex intensities \cite{clmp92, SawadaSuzuki}.
More precisely, the equation derived in \cite{SawadaSuzuki} is given by
\begin{equation}
\label{Intro:generalpb}
\left\{
\begin{aligned}
-\Delta u=&\widetilde\lambda\int_{[-1,1]}\frac{\al e^{\al u}}{\int_{\Omega}e^{\al u}\,dx}\,\Pda&&\hbox{in\ }\Omega\\
u=&0&&\hbox{on\ }\partial\Omega,
\end{aligned}
\right.
\end{equation}
where $u$ is the stream function of the two-dimensional flow, $\calP$ is a Borel probability measure defined on the interval $[-1,1]$
describing the point vortex intensity distribution and $\widetilde\lambda>0$
is a constant related to the inverse temperature.
In the special case $\Pda=\mathcal P_\gamma(d\alpha)$, where
\begin{equation}
\label{Intro:specialP}
\mathcal P_\gamma(d\alpha)=\tau\delta_1(d\al)+(1-\tau)\delta_{\bal}(d\al),
\end{equation}
and $\delta_1(d\alpha),\delta_\gamma(d\alpha)$ denote the Dirac measures concentrated at
the points $1,\gamma\in[-1,1]$, respectively, and $\tau\in(0,1]$,
problem~\eqref{Intro:generalpb} takes the form 
\begin{equation}
\label{pb:withtau}
\left\{
\begin{aligned}
-\Delta u=&\widetilde\lambda\left(\tau\frac{e^u}{\int_\Omega e^u\,dx}+ (1-\tau)\bal\frac{e^{\bal u}}{\int_\Omega e^{\bal u}\,dx}\right)
&&\hbox{in\ }\Omega\\
u=&0&&\hbox{on\ }\partial\Omega.
\end{aligned}
\right.
\end{equation}
Setting 
\begin{equation}
\label{eq:tausigma}
\lm=\widetilde\lambda\tau,
\qquad\sigma=\frac{1-\tau}{\tau},
\end{equation}
problem~\eqref{pb:withtau} reduces to \eqref{Intro:specialpb}.
\par
We observe that taking $\gamma=-1$ in problem~\eqref{Intro:specialpb} we obtain
the sinh-Poisson type problem derived in \cite{PL}:
\begin{equation}
\label{eq:sinhPoisson}
\left\{
\begin{aligned}
-\Delta u=&\lm\left(\frac{e^u}{\int_\Omega e^u}- \ts\frac{e^{-u}}{\int_\Omega e^{-u}}\right)
&&\hbox{in\ }\Omega\\
u=&0&&\hbox{on\ }\partial\Omega,
\end{aligned}
\right.
\end{equation}
which has received a considerable interest in recent years, see \cite{BJMR, grossipistoia, jevnikar, JWYZ, os2006, Ric2007}
and the references therein.
In particular, the blow-up analysis for \eqref{eq:sinhPoisson} has been clarified by geometrical arguments
involving constant mean curvature surfaces
in \cite{JWYZ}. However, such an approach seems difficult to extend to our case.
\par
For $\sigma=0$ problem~\eqref{Intro:specialpb} reduces to the standard
mean field problem
\begin{equation}
\label{eq:standMFE}
\left\{
\begin{aligned}
-\Delta u=&\lm\frac{e^u}{\int_\Omega e^u\,dx}
&&\hbox{in\ }\Omega\\
u=&0&&\hbox{on\ }\partial\Omega,
\end{aligned}
\right.
\end{equation}
which has been extensively analyzed in view of its connections to
differential geometry, physics and biology,
see, e.g., \cite{CSLin}. 
However, even in the ``positive case" $\gamma\in(0,1)$, problem~\eqref{Intro:specialpb}
does not necessarily exhibit the properties of a perturbation of \eqref{eq:standMFE}. 
This fact may be seen, for example, by considering
the optimal constant for the Moser-Trudinger inequality associated to  \eqref{Intro:specialpb},
see \cite{RS, STZ} or the proof of Lemma~\ref{lem:MT} below. In this respect, problem~\eqref{Intro:specialpb} significantly differs from its
``stochastic" version derived 
in \cite{Neri} and recently analyzed in \cite{pistoiaricciardi, RiTa, RZ, RZ2, RicZecprep}.
In fact, our aim in this article is to determine suitable smallness conditions for $|\gamma|$ \textit{and} $\sigma$ 
(see \eqref{def:gammasigma}--\eqref{def:musg} below) which ensure
that the nonlinearity $e^{\gamma u}$ may indeed be treated as ``lower-order"
with respect to the ``principal" term $e^u$.
In particular, under such smallness conditions we prove the mass quantization for 
blow-up solution sequences, we derive an improved Moser-Trudinger inequality and we
consequently obtain an existence result for solutions in the supercritical range $\lambda>8\pi$.
\par
More precisely, for every fixed $\gamma$ satisfying $0<|\gamma|<1/2$ let
\begin{equation}
\label{def:gammasigma}
\sigma_\gamma:=\frac{1-2|\gamma|}{2\gamma^2}
\end{equation}
and
\begin{equation}
\label{def:musg}
\lm_{\sigma,\gamma}:=\min\left\{\frac{16\pi}{1+2\ts\bal^2}, \,\frac{4\pi}{|\bal| (1 +|\bal| \ts )}\right\}.
\end{equation}
Our main result is the following.
\begin{thm}
\label{Intro:thm:existence}
Assume that $\mathbb R^2\setminus\Omega$
has a bounded component containing at least one interior point. 
Fix $0<|\gamma|<1/2$ and $0<\sigma<\sigma_\gamma$.
Then, there exists a solution to Problem \eqref{Intro:specialpb} for every
$\lm\in\left({8\pi},\lm_{\sigma,\gamma}\right)$.
\end{thm}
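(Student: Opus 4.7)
The plan is to prove existence via a min-max variational method applied to the associated energy functional
\[
J_\lm(u) = \tfrac12\int_\Om |\nabla u|^2\,dx - \lm\log\int_\Om e^u\,dx - \lm\ts\log\int_\Om e^{\bal u}\,dx,
\]
defined on $H^1_0(\Om)$, whose critical points are in one-to-one correspondence with the solutions of \eqref{Intro:specialpb}. I would proceed in four steps.

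\textbf{Step 1 (Moser-Trudinger analysis).} I would first invoke the sharp Moser-Trudinger inequality announced in Lemma~\ref{lem:MT}. The definition of $\lm_{\ts,\bal}$ in \eqref{def:musg} is tailored so that $J_\lm$ is bounded below on $H^1_0(\Om)$ whenever $\lm<\lm_{\ts,\bal}$ and the mass $e^u/\int_\Om e^u$ is sufficiently diffused; the precise role of the restriction $0<\ts<\ts_\bal$ is to make the cross-term $e^{\bal u}$ effectively subdominant. Adapting the Chen-Li argument, I would then derive an improved inequality asserting that, whenever the normalized mass spreads over two disjoint regions each carrying a definite fraction of the total, $J_\lm$ remains bounded below even for $\lm$ slightly larger than $8\pi$.

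\textbf{Step 2 (Topological min-max).} Since $\R^2\setminus\Om$ contains a bounded component with an interior point, $\Om$ is not simply connected, so there exists a loop $\eta:S^1\to\Om$ that cannot be contracted inside $\Om$. Following the Ding-Jost-Li-Wang strategy, I would use the standard concentration profiles $\fip_{x,r}$ centered at $x\in\Om$ to introduce
\[
\Gamma = \bigl\{h\in C(\overline D,H^1_0(\Om)) : h|_{\partial D}(t) = \fip_{\eta(t),r}\text{ for some small }r>0\bigr\}
\]
and set $c_\lm = \inf_{h\in\Gamma}\sup_{u\in h(\overline D)}J_\lm(u)$. For $\lm>8\pi$ a direct computation shows that the boundary sup is driven to $-\infty$ as $r\to 0$, while the noncontractibility of $\eta$ forces any $h\in\Gamma$ to pass through a configuration whose mass is diffused; combined with the improved inequality from Step~1, this yields $c_\lm>-\infty$, so that $c_\lm$ is a genuine min-max level.

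\textbf{Step 3 (Existence for almost every $\lm$).} I would next apply Struwe's monotonicity trick in Jeanjean's form: the natural monotonicity of $\lm\mapsto c_\lm$ on $(8\pi,\lm_{\ts,\bal})$ implies its differentiability for a.e.\ $\lm$ in this range, and at such points of differentiability one extracts a bounded Palais-Smale sequence at level $c_\lm$, hence a weak solution $u_\lm$ after passing to a subsequence. This gives solutions for $\lm$ in a dense subset of $(8\pi,\lm_{\ts,\bal})$.

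\textbf{Step 4 (Limit argument).} The main obstacle, and the place where the results developed in the rest of the paper pay off, is extending existence to every $\lm_*\in(8\pi,\lm_{\ts,\bal})$. Given $\lm_n\to\lm_*$ with solutions $u_{\lm_n}$ from Step~3, the only obstruction to compactness is concentration. The exclusion of boundary blow-up (mentioned in the abstract) together with the mass quantization property, both of which hold precisely under the smallness assumptions \eqref{def:gammasigma}--\eqref{def:musg}, imply that any such blow-up would force $\lm_n$ to accumulate at or above $\lm_{\ts,\bal}$, contradicting $\lm_*<\lm_{\ts,\bal}$. Hence $(u_{\lm_n})$ is uniformly bounded and converges to a solution at the parameter $\lm_*$, completing the proof.
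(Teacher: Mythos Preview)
Your proposal is correct and follows essentially the same route as the paper: variational setup for $J_\lm$, an improved Moser--Trudinger inequality under mass-splitting (Lemma~\ref{lem:IMT}), the Ding--Jost--Li--Wang min-max linking based on the non-contractible curve in $\Om$, Struwe's monotonicity trick to obtain solutions for a.e.\ $\lm$ (Proposition~\ref{prop:criticalvalueonOmega}), and finally compactness via the blow-up analysis of Section~\ref{sec:blowup}. One small sharpening of your Step~4: the mass quantization (Proposition~\ref{Prop:masszero}) does not merely push a hypothetical blow-up value to ``at or above $\lm_{\sigma,\gamma}$'' but forces $\lm_0\in 8\pi\mathbb N$, which is incompatible with $\lm_0\in(8\pi,\lm_{\sigma,\gamma})\subset(8\pi,16\pi)$; and note that the second term $4\pi/(|\gamma|(1+|\gamma|\sigma))$ in \eqref{def:musg} is there precisely to validate the hypothesis~\eqref{lambdan} of that proposition, while the first term governs the improved Moser--Trudinger bound.
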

We note that $8\pi<\barlm<{16\pi}$
whenever $0<|\gamma|<1/2$ and $\sigma\in(0,\sigma_\gamma)$,
see Lemma~\ref{Lemma:alpha} below. 
\par
Finally, we remark that problem~\eqref{Intro:specialpb} shares some similarity in structure with Liouville systems
and Toda-type systems.
Indeed, setting $v_1=G\ast e^u$, $v_2=G\ast e^{\gamma u}$,
$a^{11}=\lambda/\int_\Omega e^u$, $a^{12}=\lambda\sigma\gamma/\int_\Omega e^{\gamma u}$,
$a^{21}=\gamma\lambda/\int_\Omega e^u$, $a^{22}=\gamma^2\sigma\lambda/\int_\Omega e^{\gamma u}$,
we obtain $u=\lambda v_1/\int_\Omega e^u\,dx+\lambda\sigma\gamma v_2/\int_\Omega e^{\gamma u}$
and problem~\eqref{Intro:specialpb} takes the form
$-\Delta v_i=\exp\{\sum_{j=1,2}a^{ij}v_j\}$, $i=1,2$, which is a system of Liouville type, as analyzed in 
\cite{ChanilloKiessling, CSW}.
On the other hand, setting $w_1=u$, $w_2=\gamma u$,
$b^{11}=\lambda/\int_\Omega e^{w_1}$, $b^{12}=\lambda\sigma\gamma/\int_\Omega e^{w_2}$,
$b^{21}=\lambda\gamma/\int_\Omega e^{w_1}$, $b^{22}=\lambda\sigma\gamma^2/\int_e^{w_2}$,
we obtain the system $-\Delta w_i=\sum_{j=1,2}b^{ij}e^{w_j}$ $j=1,2$,
which has a ``Toda-like" structure when $\gamma<0$, see \cite{BJMR} and the references therein.
However, Theorem~\ref{Intro:thm:existence} does not follow directly from the results for
systems of Liouville and Toda type mentioned above, due to the substantially different assumptions for the coefficients
$(a^{ij})$ and $(b^{ij})$, $i,j=1,2$.  
\par
This note is organized as follows. In Section~\ref{sec:blowup} we use Brezis-Merle
estimates \cite{BM} to exclude the existence of blow-up points on the boundary and to
derive a mass quantization property,
for suitably small values of $\gamma$ and $\sigma$.
We note that the exclusion of boundary blow-up points could also be derived by
extending the argument in \cite{RiTa}. Here, we provide a simple \textit{ad hoc}
proof which exploits the smallness assumptions on $|\gamma|$ and $\sigma$.
In Section~\ref{sec:minimax} we derive an improved Moser-Trudinger type inequality.
We prove Theorem~\ref{Intro:thm:existence} by suitably adapting an argument in \cite{DJLW}
and by applying the blow-up results derived in Section~\ref{sec:blowup}.
\subsection*{Notation}
Henceforth, all integrals are taken with respect to the Lebesgue measure. 
We may omit the integration variables if they are clear from the context.
We denote by $C$ a general constant whose actual value may vary from line to line.
\section{Blow-up results}
\label{sec:blowup}
In this section we show that for suitably small values of $|\gamma|$ and $\sigma$ the blow-up analysis
for problem~\eqref{Intro:specialpb} is similar to the blow-up analysis for\ the standard mean field equation
\eqref{eq:standMFE}.
We first exclude the existence of boundary blow-up points. Then, we prove a mass quantization property.
\par
More precisely, let $(u_n,\lm_n)$ be a solution sequence for \eqref{Intro:specialpb} with $\lm_n\to\lm_0\ge0$.
We define
\[
\mathcal S_\pm=\{x_0\in\overline\Omega:\ \exists x_n\to x_0\ \mbox{such that\ }u_n(x_n)\to\pm\infty\}
\]
and we set $\mathcal S=\mathcal S_+\cup\mathcal S_-$.
\subsection{Boundary blow-up exclusion}
In the case where $\gamma>0$ the boundary blow-up is readily excluded in view of
the moving plane argument in \cite{GNN}, p.~223.
Therefore, throughout this subsection, we consider the ``asymmetric sinh-case" of  \eqref{Intro:specialpb},
namely
\begin{equation}
\label{Bup-exclusion:specialpb}
\left\{
\begin{aligned}
-\Delta u=&\lm\left(\frac{e^u}{\int_\Omega e^u}-\ts|\bal|\frac{e^{-|\bal|u}}{\int_\Omega e^{-|\bal|u}}\right)
&&\hbox{in\ }\Omega\\
u=&0&&\hbox{on\ }\partial\Omega.
\end{aligned}
\right.
\end{equation}
We make the following assumption:
\begin{equation}
\label{assumpt:BM}
\lm (1+\ts|\bal|)<\frac{4\pi}{|\gamma|}.
\end{equation}
In this subsection we show the following.
\begin{prop}
\label{prop:bdryblowupexcl}
Let $(u_n,\lm_n)$ be a solution sequence for problem~\eqref{Bup-exclusion:specialpb} with $\lm_n\to\lm_0\ge0$
and assume that $\lm_0$ satisfies \eqref{assumpt:BM}.
Then, $\mathcal S\cap\partial\Omega=\emptyset$.
\end{prop}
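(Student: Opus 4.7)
Suppose by contradiction that $x_0 \in \mathcal S \cap \partial\Omega$ and choose $x_n \in \Omega$ with $x_n \to x_0$ and $|u_n(x_n)| \to \infty$. The plan is to bound uniformly in $L^\infty$ the ``negative-exponential'' potential $w_n$ defined below, which excludes $\mathcal S_-$ outright, and then to handle $\mathcal S_+ \cap \partial\Omega$ by reducing to a mean-field-type equation with positive right-hand side; the smallness \eqref{assumpt:BM} is precisely what enables the first step.

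First, since $\|\Delta u_n\|_{L^1(\Omega)} \leq \lambda_n(1+\sigma|\gamma|)$ stays strictly below $4\pi/|\gamma|$ for $n$ large by \eqref{assumpt:BM}, the Brezis-Merle inequality \cite{BM} furnishes some $\delta>0$ with
\[
\int_\Omega e^{|\gamma|(1+\delta)|u_n|}\,dx \leq C.
\]
An $L^1(\Omega)$-bound on $u_n$ (via the Green's representation of $u_n$) and Jensen's inequality then yield the positive lower bound $\int_\Omega e^{-|\gamma|u_n}\,dx \geq c>0$. Decomposing $u_n = v_n - w_n$, where $v_n, w_n \geq 0$ are the zero-Dirichlet solutions of $-\Delta v_n = \lambda_n e^{u_n}/\int_\Omega e^{u_n}$ and $-\Delta w_n = \lambda_n\sigma|\gamma|e^{-|\gamma|u_n}/\int_\Omega e^{-|\gamma|u_n}$, the source of $w_n$ is uniformly bounded in $L^{1+\delta}(\Omega)$. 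Hence Calder\'on-Zygmund $W^{2,1+\delta}$-regularity together with the two-dimensional Sobolev embedding yield, for some $\alpha \in (0,1)$,
\[
\|w_n\|_{L^\infty(\Omega)}+\|w_n\|_{C^{0,\alpha}(\Omega)} \leq C.
\]

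From $u_n = v_n - w_n \geq -w_n \geq -C$, the sequence $\{u_n\}$ is uniformly bounded from below in $\Omega$, so $\mathcal S_- = \emptyset$. For $x_0 \in \mathcal S_+ \cap \partial\Omega$, we instead have $v_n(x_n) \to \infty$, while $v_n$ solves the Liouville-type equation
\[
-\Delta v_n = \frac{\lambda_n e^{-w_n}}{\int_\Omega e^{v_n - w_n}\,dx}\, e^{v_n} \geq 0, \qquad v_n\big|_{\partial\Omega}=0,
\]
with a nonnegative, uniformly H\"older-continuous weight. Boundary blow-up of $v_n$ is excluded by the classical moving-plane argument of Gidas-Ni-Nirenberg \cite{GNN} applied locally near $x_0$ after flattening $\partial\Omega$, yielding the required contradiction.

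The main obstacle is the treatment of $\mathcal S_+$ at the boundary: the moving-plane method for $v_n$ must accommodate the $x$-dependence of the weight $e^{-w_n}$, and the $C^{0,\alpha}$-control on $w_n$ established above is precisely what makes this accommodation possible.
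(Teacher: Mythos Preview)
Your overall strategy coincides with the paper's: decompose $u_n=v_n-w_n$ with $v_n,w_n\ge0$, use the Brezis--Merle estimate (made available by \eqref{assumpt:BM}) to control $w_n$, and then run a Gidas--Ni--Nirenberg moving-plane argument on the positive-part equation $-\Delta v_n=h_n\,e^{v_n}$ with weight $h_n=\lambda_n e^{-w_n}/\int_\Omega e^{u_n}$.

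There is, however, a genuine gap in the last step. You stop at $w_n\in C^{0,\alpha}$ and assert that this ``is precisely what makes'' the moving planes work. It is not. The moving-plane comparison near a boundary point $x_0$ requires the weight $h_n$ to be monotone in the direction normal to $\partial\Omega$ (after the paper's inversion, this becomes the statement that $H(y)=r^4|y|^{-4}h_n(x(y))$ is decreasing in $y_1$). Verifying that monotonicity needs $\nabla h_n$ to exist and be uniformly bounded, so that the strictly negative contribution $\partial_{y_1}(r^4/|y|^4)$ dominates the perturbation coming from $h_n$; mere H\"older continuity of $h_n$ does not prevent oscillation and cannot yield the required monotonicity. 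The same obstruction arises if you flatten the boundary instead of inverting: the reflected function $v_n(y',-y_d)$ then satisfies an equation with a \emph{different} weight, and comparing the two via the maximum principle needs a Lipschitz (hence $C^1$) estimate on the weight.

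The fix is a one-line bootstrap which the paper carries out and you omit. Once $\|w_n\|_{L^\infty}\le C$ and $v_n\ge0$, the source term of $w_n$ satisfies
\[
\frac{\lambda_n\sigma|\gamma|\,e^{-|\gamma|u_n}}{\int_\Omega e^{-|\gamma|u_n}}
=\frac{\lambda_n\sigma|\gamma|\,e^{|\gamma|w_n}e^{-|\gamma|v_n}}{\int_\Omega e^{-|\gamma|u_n}}
\le C\,e^{|\gamma|\|w_n\|_{L^\infty}}\le C',
\]
so $-\Delta w_n\in L^\infty(\Omega)$, whence $w_n\in W^{2,p}(\Omega)$ for every $p<\infty$ and $\|w_n\|_{C^{1,\alpha}(\overline\Omega)}\le C$. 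With this $C^{1}$-control on $h_n=e^{-w_n}$ in hand, the inversion argument on p.~223 of \cite{GNN} (as detailed in the paper) goes through and excludes boundary blow-up for $v_n$, completing your proof.
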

We first reduce problem \eqref{Bup-exclusion:specialpb} to a mean field type problem with smooth weight function.
Let $G=G(x,y)$ be the Green's function defined for $x,y\in\Omega$ by
\begin{equation*}
\label{def:Green}
\left\{
\begin{aligned}
-\Delta G(\cdot,y)=&\delta_y
&&\hbox{in\ }\Omega\\
G(\cdot,y)=&0&&\hbox{on\ }\partial\Omega.
\end{aligned}
\right.
\end{equation*}
Let $u:=u_+-u_-$, where  
\begin{equation*}
\begin{split}
&u_+= G*\lm \frac{e^u}{\int_\Omega e^u}\\
&u_-= G*\lm\ts|\bal|\frac{e^{-|\bal|u}}{\int_\Omega e^{-|\bal|u}}.\\
\end{split}
\end{equation*}
We observe that 
\begin{equation*}
\label{Bup-exclusion:equiv-specialpb}
\left\{
\begin{aligned}
-\Delta u_+=&\lm \frac{h(x)e^{u_+}} {\int_\Omega h(x) e^{u_+}}
&&\hbox{in\ }\Omega\\
u_+=&0&&\hbox{on\ }\partial\Omega,
\end{aligned}
\right.
\end{equation*}
where $h(x)=e^{-u_-}$ satisfies $\|h\|_{C^{1,\alpha}(\bar\Omega)} \leq C$, $h\equiv 1$ on $\partial \Om$. 
In fact, we have
\begin{equation*}
\left\{
\begin{aligned}
-\Delta u_-= &\lm\ts|\bal|\frac{e^{-|\bal|u}}{\int_\Omega e^{-|\bal|u}}
&&\hbox{in\ }\Omega\\
u_-=&0&&\hbox{on\ }\partial\Omega,
\end{aligned}
\right.
\end{equation*}
where $\lm \ts|\bal|\frac{e^{-|\bal|u}}{\int_\Omega e^{-|\bal|u}}$ is $L^q$-bounded for some $q>1$. 
To see this fact, 
recall from \cite{BM} that if $u$ satisfies:
\begin{equation*}
\left\{
\begin{aligned}
-\Delta u=&f&&\hbox{in\ }\Omega\\
u=&0&&\hbox{on\ }\partial\Omega,
\end{aligned}
\right.
\end{equation*} 
for some $f\in L^1(\Om),$ then for any small $\delta >0$ we have
\begin{equation}
\label{eq:BMestimate}
\int_\Omega\exp\{\frac{(4\pi-\delta)}{\|f\|_{L^1(\Omega)}}|u|\}\le\frac{4\pi^2}{\delta}(\mathrm{diam}\Omega)^2.
\end{equation}
Hence, by elliptic estimates, $\|u_-\|_{L^\infty(\Om)} \leq C$.
Now we write the equation for $u_-$ in the form 
\begin{equation}
\label{eq:uminusbdd}
\left\{
\begin{aligned}
-\Delta u_-= &\lm \ts|\bal|\frac{e^{|\bal|u_- -|\bal|u_+}}{\int_\Omega e^{-|\bal|u}}
&&\hbox{in\ }\Omega\\
u_-=&0&&\hbox{on\ }\partial\Omega
\end{aligned}
\right.
\end{equation}
and we observe that since $u_+\geq 0$ we have $e^{-|\bal|u_+}\leq 1.$  
Hence, the right hand side in \eqref{eq:uminusbdd} is $L^\infty(\Omega)$-bounded. 
It follows that $\|u_-\|_{W^{2,p} (\Omega)} \leq C$ for every $p\in (1,+\infty)$. 
In particular, $\|u_-\|_{C^{1,\alpha}(\overline\Omega)} \leq C$.
\begin{proof}[Proof of Proposition~\ref{prop:bdryblowupexcl}]
We adapt an argument of \cite{GNN} p.~223 to our case.
Let $x_0\in\partial\Omega$ and let $D_r$ be a closed disc touching $\overline\Omega$ only at $x_0$.
For convenience we assume $D_r=D(0,r)$ and $x_0=(r,0)$.
Then, the inversion mapping $x\mapsto y=r^2x/|x|^2$ fixes the boundary of $D_r$ and maps $\Omega$ to a region 
$y(\overline\Omega)$ contained inside $D_r$.
Setting $v(y)=u^+(x)$, recalling that
\begin{equation}
\label{eq:invlin}
D_x=\frac{r^2}{|y|^4}\left(
\begin{aligned}
&-y_1^2+y_2^2&&-2y_1y_2\\
&-2y_1y_2&&y_1^2-y_2^2
\end{aligned}
\right),
\qquad\qquad D_x^TD_x=\frac{r^4}{|y|^4}I,
\end{equation}
where $I$ denotes the identity mapping, we obtain the following equation for $v$:
\begin{equation*}
\label{eq:vinverion}
\Delta v+\rho\frac{r^4}{|y|^4}h(x(y))e^v=0
\qquad\mbox{in\ }y(\overline\Omega).
\end{equation*}
\underline{Claim.}
For every $x_0\in\partial\Omega$ there exist $r(x_0)>0$, $\delta(x_0)>0$ and $\delta'(x_0)>0$ such that the function
\[
H(y)=\frac{r^4}{|y|^4}h(x(y))
\]
is decreasing in $y_1$-direction in the set
\[
\left\{
\frac{r(x_0)}{1+\delta(x_0)}\le|y|\le r(x_0),\ y_1>0,\ |y_2|\le\delta'(x_0)
\right\}\subset y(\overline\Omega),
\]
provided that $r\le r(x_0)$.
\\
Proof of Claim.
In view of \eqref{eq:invlin} we compute:
\begin{equation*}
\begin{aligned}
&\partial_{y_1}\frac{1}{|y|^4}=-\frac{4y_1}{|y|^6}\\
&\partial_{y_1}h(x(y))=\frac{r^2}{|y|^4}\Big\{(\partial_{x_1}h(x(y)))(-y_1^2+y_2^2)+(\partial_{x_2}h(x(y)))(-2y_1y_2)\Big\}
\end{aligned}
\end{equation*}
and therefore
\begin{equation*}
\begin{aligned}
\frac{1}{r^4}\partial_{y_1}H(y)=&\frac{1}{|y|^6}
\Big\{
-4y_1h(x(y))+\frac{r^2}{|y|^2}[(\partial_{x_1}h(x(y)))(-y_1^2+y_2^2)+(\partial_{x_2}h(x(y)))(-2y_1y_2)]
\Big\}\\
=&\frac{1}{|y|^6}
\Big\{
-y_1[4h(x(y))+\frac{r^2}{|y|^2}(\partial_{x_1}h)y_1]+\frac{r^2}{|y|^2}y_2^2(\partial_{x_1}h)
+\frac{r^2}{|y|^2}(\partial_{x_2}h)(-2y_1y_2)
\Big\}.
\end{aligned}
\end{equation*}
We estimate, for $|y|\ge r/(1+\delta)$, $|y_2|<\delta'$:
\begin{equation*}
\begin{aligned}
&\left|\frac{r^2}{|y|^2}(\partial_{x_1}h)y_1\right|\le(1+\delta)^2\|h\|_{C^1(\overline\Omega)}r\\
&\left|\frac{r^2}{|y|^2}y_2^2(\partial_{x_1}h)\right|\le(1+\delta)^2\|h\|_{C^1(\overline\Omega)}(\delta')^2\\
&\left|\frac{r^2}{|y|^2}(\partial_{x_2}h)(-2y_1y_2)\right|\le2(1+\delta)^2\|h\|_{C^1(\overline\Omega)}r\delta'.
\end{aligned}
\end{equation*}
By choosing $r=r(x_0)$ sufficiently small, we achieve
\[
4h(x(y))+\frac{r^2}{|y|^2}(\partial_{x_1}h)y_1\ge2.
\]
Then, for $\delta'$ sufficiently small, we have
\begin{equation*}
\begin{aligned}
-y_1[4h(x(y))+\frac{r^2}{|y|^2}(\partial_{x_1})y_1]&+\frac{r^2}{|y|^2}y_2^2(\partial_{x_1}h)
+\frac{r^2}{|y|^2}(\partial_{x_2}h)(-2y_1y_2)\\
\le&-2y_1+(1+\delta)^2\|h\|_{C^1(\overline\Omega)}(\delta')^2+2(1+\delta)^2\|h\|_{C^1(\overline\Omega)}r\delta'\\
\le&-\frac{r}{2}<0.
\end{aligned}
\end{equation*}
Now the argument in \cite{GNN} concludes the proof.
\end{proof}
\subsection{Mass quantization}
In view of Proposition~\ref{prop:bdryblowupexcl} we have $\mathcal S\cap\partial\Omega=\emptyset$.
Therefore, by local blow-up results from \cite{ORS,RZ} we know that setting
\[
\tilmu_1(dx):=\lm\frac{e^u}{\int_\Omega e^u}\,dx,\qquad\qquad
\tilmu_{\bal}(dx):=\lm\frac{e^{\bal u}}{\int_\Omega e^{\bal u}}\,dx
\]
we have 
\begin{equation}
\label{eq:mulimits}
\begin{aligned}
\tilmu_1(dx)&\stackrel{\ast}{\rightharpoonup}\sum_{p\in\mathcal S}\tilmm_1(p)\delta_p(dx)+\tilrr_1(x)\,dx\\
\tilmu_{\bal}(dx)&\stackrel{\ast}{\rightharpoonup}\sum_{p\in\mathcal S}\tilmm_{\bal}(p)\delta_p(dx)+\tilrr_{\bal}(x)\,dx.
\end{aligned}
\end{equation}
\begin{lemma}
At every fixed $p\in\mathcal S$ we have the quadratic identity:
\begin{equation}
\label{eq:quadid}
8\pi( \tilmm_1(p)+\ts \tilmm_{\bal}(p))=\left( \tilmm_1(p)+\ts\bal \tilmm_{\bal}(p)\right)^2.
\end{equation}
\end{lemma}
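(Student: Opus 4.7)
The plan is to derive \eqref{eq:quadid} via a local Pohozaev-type identity applied on a small disc around $p$, followed by passing to the limit $n\to\infty$, then $r\to 0^+$. By Proposition~\ref{prop:bdryblowupexcl} we have $\mathcal S\cap\bdry=\emptyset$; combined with the standard Brezis--Merle alternative applied to \eqref{Intro:specialpb}, this shows $\mathcal S$ is a finite set of interior points. Fix $p\in\mathcal S$ and choose $r>0$ so small that $\overline{B_r(p)}\subset\Omega$ and $p$ is the unique blow-up point inside. Testing the equation $-\Delta u_n=\lm_n(f_{1,n}+\ts\bal f_{\bal,n})$, with $f_{1,n}:=e^{u_n}/\int_\Omega e^{u_n}$ and $f_{\bal,n}:=e^{\bal u_n}/\int_\Omega e^{\bal u_n}$, against $(x-p)\cdot\nabla u_n$ and integrating over $B_r(p)$, the left-hand side produces the Pohozaev boundary form
\begin{equation*}
\tfrac{r}{2}\int_{\partial B_r(p)}|\nabla u_n|^2\,dS - r\int_{\partial B_r(p)}|\partial_\nu u_n|^2\,dS.
\end{equation*}
For the right-hand side, using the identity $e^{\al u_n}(x-p)\cdot\nabla u_n = \al^{-1}(x-p)\cdot\nabla e^{\al u_n}$, valid for $\al\in\{1,\bal\}$, together with the divergence theorem (note that the factor $\bal$ cancels in the $\bal$-term), one finds
\begin{equation*}
\lm_n\!\!\int_{B_r(p)}\!\!(f_{1,n}+\ts\bal f_{\bal,n})(x-p)\cdot\nabla u_n\,dx = r\!\int_{\partial B_r(p)}\!\!\lm_n(f_{1,n}+\ts f_{\bal,n})\,dS - 2\tilmu_1(B_r(p)) - 2\ts\tilmu_\bal(B_r(p)).
\end{equation*}

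Next I would pass $n\to\infty$ with $r$ fixed. By \eqref{eq:mulimits} and standard elliptic estimates away from $\mathcal S$, the sequence $u_n$ converges in $C^1_{\mathrm{loc}}(\overline\Omega\setminus\mathcal S)$ to a limit $u_*$ satisfying distributionally $-\Delta u_*=\tilrr_1+\ts\bal\tilrr_\bal+\sum_{q\in\mathcal S}M_q\delta_q$, where $M_q:=\tilmm_1(q)+\ts\bal\tilmm_\bal(q)$. The density boundary integrals on $\partial B_r(p)$ converge to their smooth regular-part counterparts, which are $O(r)$ as $r\to 0^+$; meanwhile $\tilmu_1(B_r(p))\to\tilmm_1(p)$ and $\tilmu_\bal(B_r(p))\to\tilmm_\bal(p)$ modulo $O(r^2)$ contributions from $\tilrr_1,\tilrr_\bal$. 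Near $p$ we have the expansion $u_*(x)=-\tfrac{M_p}{2\pi}\log|x-p|+h(x)$ with $h\in C^1(B_r(p))$, so that $\partial_\nu u_*=-M_p/(2\pi r)+O(1)$ and $|\nabla u_*|^2=M_p^2/(4\pi^2 r^2)+O(r^{-1})$ on $\partial B_r(p)$, which yields
\begin{equation*}
\tfrac{r}{2}\int_{\partial B_r(p)}|\nabla u_*|^2\,dS - r\int_{\partial B_r(p)}|\partial_\nu u_*|^2\,dS = -\frac{M_p^2}{4\pi} + O(r).
\end{equation*}
Equating both sides of the Pohozaev identity and sending $r\to 0^+$ yields $-M_p^2/(4\pi)=-2(\tilmm_1(p)+\ts\tilmm_\bal(p))$, which is precisely \eqref{eq:quadid}.

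The main obstacle lies in making rigorous the $C^1$ convergence of $u_n$ on $\partial B_r(p)$ and the Green-function-type singular expansion of $u_*$ at $p$. Both rest on the uniform $L^1$-bound of the concentrating densities $\lm_n f_{1,n}$ and $\lm_n f_{\bal,n}$, combined with the Brezis--Merle $L^\infty$-estimate \eqref{eq:BMestimate} applied to $u_n$ on $\Omega\setminus\bigcup_{q\in\mathcal S}B_\rho(q)$ for small $\rho>0$, which delivers uniform local boundedness of $u_n$ and then standard elliptic regularity on compact subsets of $\overline\Omega\setminus\mathcal S$; finiteness of $\mathcal S$, guaranteed by the $\varepsilon$-regularity alternative, is crucial here.
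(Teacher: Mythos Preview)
Your Pohozaev computation is correct and yields \eqref{eq:quadid} as written; the divergence identities, the boundary terms, and the singular expansion of $u_*$ are all handled properly. The only minor caveat is that the convergence $\tilmu_1(B_r(p))\to\tilmm_1(p)+\int_{B_r(p)}\tilrr_1$ requires the limit measure not to charge $\partial B_r(p)$, which holds for a.e.\ $r>0$; this is routine and does not affect the argument.

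However, your route differs from the paper's primary proof. The paper does not carry out the Pohozaev identity directly; instead it invokes the result of \cite{ORS} for problem~\eqref{pb:withtau}, which already provides the relation
\[
8\pi\bigl(\tau\widetilde m_1(p)+(1-\tau)\widetilde m_\gamma(p)\bigr)
=\bigl(\tau\widetilde m_1(p)+(1-\tau)\gamma\widetilde m_\gamma(p)\bigr)^2,
\]
and then simply substitutes $\tau\widetilde m_1(p)=\tilmm_1(p)$ and $(1-\tau)\widetilde m_\gamma(p)=\ts\tilmm_\gamma(p)$ via \eqref{eq:tausigma}. The paper does explicitly mention your approach as an alternative (``Alternatively, we may derive identity~\eqref{eq:quadid} by applying the Pohozaev identity in a standard way''), but leaves it unexpanded. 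What you gain is a self-contained argument that does not depend on the reader tracking back to \cite{ORS}; what the paper gains is brevity and a cleaner appeal to existing literature. Both are legitimate; yours is the more informative choice for a reader unfamiliar with \cite{ORS}.
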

\begin{proof}
We recall from \cite{ORS} that if
$(u_k,\widetilde\lambda_k)$ is a solution sequence for 
\eqref{pb:withtau} with
\[
\widetilde\lambda\frac{e^{u_k}}{\int_\Omega e^{u_k}}\stackrel{*}{\rightharpoonup}
\sum_{p\in\mathcal S}\widetilde m_1(p)\delta_p(dx)+\widetilde r_1(x),
\qquad
\widetilde\lambda\frac{e^{\gamma u_k}}{\int_\Omega e^{\gamma u_k}}\stackrel{*}{\rightharpoonup}
\sum_{p\in\mathcal S}\widetilde m_\gamma(p)\delta_p(dx)+\widetilde r_\gamma(x),
\]
where $\delta_p(dx)$ denotes the Dirac mass centered at $p\in\Omega$,
then the following relation holds:
\[
8\pi(\tau\widetilde m_1(p)+(1-\tau)\widetilde m_\gamma(p))
=\left(\tau\widetilde m_1(p)+(1-\tau)\gamma\widetilde m_\gamma(p)\right)^2,
\]
for every $p\in\mathcal S$.
In view of \eqref{eq:tausigma} we have $\tau\widetilde m_1(p)=m_1(p)$
and $(1-\tau)\widetilde m_\gamma(p)=\sigma m_\gamma(p)$.
Hence, we derive the asserted identity.
Alternatively, we may derive identity~\eqref{eq:quadid}
by applying the Pohozaev identity in a standard way.
\end{proof}
\begin{lemma}
\label{lem:intboundbelow}
Let $u_n$ be a solution sequence for \eqref{Intro:specialpb}.
For any $\bal\in[-1,1]$ we have
\begin{equation*}
\label{eq:intboundbelow}
\int_\Omega e^{\bal u_n}\ge c_0>0.
\end{equation*}
\end{lemma}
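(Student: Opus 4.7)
The plan is to combine the Green's function representation with Jensen's inequality, which handles both $\bal>0$ and $\bal<0$ uniformly without any case distinction. Let $G(x,y)$ denote the Green's function of $-\Delta$ on $\Om$ with zero Dirichlet data, and let $\psi(y):=\int_\Om G(x,y)\,dx$ be the associated torsion function, so that $-\Delta\psi=1$ in $\Om$, $\psi=0$ on $\bdry$, and in particular $0\le\psi\le C_\Om$ for a constant depending only on $\Om$.

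Applying the Green's representation to \eqref{Intro:specialpb} yields
\begin{equation*}
u_n(x)=\lm_n\int_\Om G(x,y)\left[\frac{e^{u_n(y)}}{\int_\Om e^{u_n}}+\ts\bal\frac{e^{\bal u_n(y)}}{\int_\Om e^{\bal u_n}}\right]dy,
\end{equation*}
and after integrating in $x$ and exchanging the order of integration,
\begin{equation*}
\int_\Om u_n\,dx=\lm_n\int_\Om\psi(y)\left[\frac{e^{u_n(y)}}{\int_\Om e^{u_n}}+\ts\bal\frac{e^{\bal u_n(y)}}{\int_\Om e^{\bal u_n}}\right]dy.
\end{equation*}
Since the two bracketed expressions are probability densities on $\Om$ and $0\le\psi\le C_\Om$, the right-hand side is bounded in absolute value by $\lm_n C_\Om(1+\ts|\bal|)$, which in turn is uniformly bounded because $\lm_n$ is bounded along the sequence. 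Therefore the mean $\overline{u}_n:=|\Om|^{-1}\int_\Om u_n\,dx$ satisfies $|\overline{u}_n|\le C$ uniformly in $n$.

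Finally, Jensen's inequality applied to the convex function $t\mapsto e^{\bal t}$ gives
\begin{equation*}
\int_\Om e^{\bal u_n}\,dx\ge|\Om|\,e^{\bal\overline{u}_n}\ge|\Om|\,e^{-|\bal|C}=:c_0>0,
\end{equation*}
which is exactly the claimed lower bound. The possibly negative sign of $\ts\bal$ enters the estimate for $\int_\Om u_n$ only through its absolute value, so there is no sign subtlety. I do not expect any serious obstacle here; the only care required is in recalling that the torsion function is bounded, which is a classical fact for smooth bounded $\Om$.
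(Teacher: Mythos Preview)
Your argument is correct and takes a genuinely different route from the paper. The paper splits into cases: for $\gamma>0$ it invokes the maximum principle to get $u_n\ge0$ and hence $\int_\Omega e^{\gamma u_n}\ge|\Omega|$; for $\gamma<0$ it uses the uniform $W_0^{1,q}$ bound (coming from the $L^1$ right-hand side) to extract a subsequence converging a.e.\ to some $u_0$, and then applies Fatou's lemma to obtain $\liminf\int_\Omega e^{\gamma u_n}\ge\int_\Omega e^{\gamma u_0}>0$. Your approach instead bounds the spatial mean $\bar u_n$ directly via the Green's representation and the boundedness of the torsion function, and then concludes by Jensen's inequality. This is more uniform and more elementary: it avoids any case distinction, produces an explicit constant $c_0=|\Omega|e^{-|\gamma|C}$, and sidesteps the compactness/subsequence step (which in the paper's argument tacitly requires the usual ``every subsequence has a further subsequence'' reasoning to upgrade to the full sequence). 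The only implicit hypothesis you use beyond the lemma's statement is that $\lambda_n$ stays bounded, which is the standing assumption $\lambda_n\to\lambda_0$ in the surrounding section.
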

\begin{proof}
If $\bal>0$, we have $u_n\ge0$ in $\Omega$ by the maximum principle and therefore
\[
\int_\Omega e^{\bal u_n}\ge|\Omega|>0.
\]
Therefore, we assume $\bal<0$.
We note that since $\|u_n\|_{W_0^{1,q}(\Omega)}\le C$ for any $q\in[1,2)$, there 
exists $u_0\in W_0^{1,q}(\Omega)$ such that $u_n\to u_0$ weakly in $W_0^{1,q}(\Omega)$,
strongly in $L^p(\Omega)$ for any $p\ge1$ and a.e.\ in $\Omega$.
In view of Fatou's lemma, we derive
\[
\liminf_{n\to\infty}\int_\Omega e^{\bal u_n}\ge\int_\Omega e^{\bal u_0}>0.
\]
\end{proof}
\begin{prop}[Mass quantization] 
\label{Prop:masszero} Let $(u_n,\lm_n)$ be a solution sequence for \eqref{Intro:specialpb} with $\lm_n\to\lm_0.$
Assume that $|\gamma|<1/2$ and $\sigma\in(0,\sigma_\gamma)$, where $\sigma_\gamma$ is defined in \eqref{def:gammasigma}.
Moreover, assume that
\begin{equation}
\label{lambdan}
{8\pi}<\lm_0<\frac{4\pi}{|\bal| (1 +|\bal| \ts)}.
\end{equation}
Then, 
we have
$\tilmm_{\bal}(p)=0$ and consequently $\tilmm_1(p)=8\pi$, $\tilrr_1\equiv0$ and $\lm_0\in8\pi\mathbb N$. 
\end{prop}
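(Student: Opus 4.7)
The plan is to use the smallness assumption \eqref{lambdan} to strip the measure $\tilmu_\bal$ of its singular part via a global Brezis-Merle bound, and then to read off the remaining assertions from the quadratic identity \eqref{eq:quadid} combined with the standard blow-up alternative for the principal mean-field term.

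For the main estimate I would apply \eqref{eq:BMestimate} globally to $u_n$. From \eqref{Intro:specialpb} one has
\[
\bigl\|-\Delta u_n\bigr\|_{L^1(\Omega)}\le \lambda_n\bigl(1+\sigma|\gamma|\bigr),
\]
so \eqref{eq:BMestimate} delivers $\int_\Omega e^{\alpha|u_n|}\,dx\le C$ for every $\alpha<4\pi/[\lambda_n(1+\sigma|\gamma|)]$. The hypothesis \eqref{lambdan} is exactly that $|\gamma|$ lies strictly below this admissible threshold, so we may fix a single $q>1$ and a small $\delta>0$ with $q|\gamma|\,\lambda_n(1+\sigma|\gamma|)<4\pi-\delta$ for all large $n$; this yields $\|e^{\bal u_n}\|_{L^q(\Omega)}\le C$. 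Together with the lower bound $\int_\Omega e^{\bal u_n}\ge c_0>0$ from Lemma~\ref{lem:intboundbelow}, this forces the densities of $\tilmu_\bal$ to be uniformly bounded in $L^q(\Omega)$, hence every weak-$\ast$ limit in $\MOm$ is absolutely continuous with $L^q$-density. In particular $\tilmm_\bal(p)=0$ for every $p\in\mathcal S$.

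Substituting $\tilmm_\bal(p)=0$ into \eqref{eq:quadid} collapses it to $8\pi\tilmm_1(p)=\tilmm_1(p)^2$, so $\tilmm_1(p)\in\{0,8\pi\}$. To rule out the trivial root at a genuine blow-up point I would distinguish according to the sign of $\gamma$: when $\gamma>0$ the maximum principle gives $u_n\ge 0$, so $\mathcal S=\mathcal S_+$ and the principal density $e^{u_n}/\int_\Omega e^{u_n}$ must concentrate at $p$; when $\gamma<0$ the $L^q$-bound just obtained prevents the $\bal$-density from accounting for the divergence of $u_n$ at $p$, so once again the principal term is responsible. In either case $\tilmm_1(p)=8\pi$. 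The vanishing $\tilrr_1\equiv 0$ then follows from the local Brezis-Merle alternative: on any compact $K\subset\overline\Omega\setminus\mathcal S$ the sequence $u_n$ is uniformly bounded, whereas concentration at $\mathcal S$ forces $\int_\Omega e^{u_n}\to\infty$, so $e^{u_n}/\int_\Omega e^{u_n}\to0$ uniformly on $K$. Conservation of total mass finally gives $\lambda_0=8\pi\cdot\sharp\mathcal S\in 8\pi\mathbb N$.

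The delicate point I anticipate is the exclusion of the trivial root $\tilmm_1(p)=0$ at each $p\in\mathcal S$: since $\mathcal S$ is defined by pointwise divergence of $u_n$ rather than by mass concentration, one must carefully verify that under our smallness assumptions no ``phantom'' blow-up point can be produced by the $\bal$-term alone. The global $L^q$-bound obtained in the first step should suffice, since it prevents any local concentration of the $\bal$-density and thus shifts the entire divergence mechanism onto the principal term, but assembling this quantitatively is the step that requires most care.
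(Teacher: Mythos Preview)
Your proposal follows the paper's route step for step: the global Brezis--Merle estimate \eqref{eq:BMestimate} applied to $-\Delta u_n$ (with $\|{-}\Delta u_n\|_{L^1}\le\lambda_n(1+\sigma|\gamma|)$) yields an $L^q$-bound on $e^{\gamma u_n}$ for some $q>1$, whence $\tilmm_\gamma(p)=0$; the quadratic identity \eqref{eq:quadid} then gives $\tilmm_1(p)=8\pi$; and finally $\tilrr_1\equiv0$, $\lambda_0\in8\pi\mathbb N$.

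The one place where your sketch is thinner than the paper is the implication $\tilmm_1(p)=8\pi\Rightarrow\int_\Omega e^{u_n}\to\infty$, which you need to kill $\tilrr_1$. You write that ``concentration at $\mathcal S$ forces $\int_\Omega e^{u_n}\to\infty$'', but concentration of the \emph{normalized} measure does not by itself force the denominator to diverge; this is precisely the step that requires an argument. The paper supplies it by the decomposition $u_n=w_{n,1}+w_{n,2}$ with $-\Delta w_{n,2}=\sigma\gamma\lambda_n e^{\gamma u_n}/\int_\Omega e^{\gamma u_n}$: the $L^q$-bound you already obtained gives $\|w_{n,2}\|_{L^\infty}\le C$, hence $e^{u_n}\ge e^{-C}e^{w_{n,1}}$, while $w_{n,1}\to 8\pi\,G(\cdot,p)+(\text{regular})=4\log\frac{1}{|x-p|}+(\text{regular})$ near $p$. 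Fatou's lemma then yields
\[
\liminf_{n\to\infty}\int_\Omega e^{u_n}\ \ge\ e^{-C}\int_U\frac{dx}{|x-p|^4}\ =\ +\infty,
\]
which is the missing link in your argument.

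As for the ``delicate point'' you raise (excluding the root $\tilmm_1(p)=0$): you are right that \eqref{eq:quadid} alone leaves this open, and the paper is equally terse here. The resolution, however, does not require your case analysis on the sign of $\gamma$; it is already contained in the local blow-up framework of \cite{ORS,RZ} invoked just before \eqref{eq:mulimits}, in which $\mathcal S$ coincides with the set of mass-concentration points of the full right-hand side. Once $\tilmm_\gamma(p)=0$, membership $p\in\mathcal S$ forces $\tilmm_1(p)>0$.
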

\begin{proof} 
Throughout this proof we omit the subscript $n$. 
Similarly as above, in view of \eqref{eq:BMestimate} with $f=\lm(\frac{e^u}{\int_\Omega e^u}+\ts \bal\frac{e^{\bal u}}{\int_\Omega e^{\bal u}})$,
$\|f\|_{L^1(\Omega)}\le \lm(1+\ts |\bal|)$, we have that
$\|e^{\bal u}\|_{L^q(\Omega)}$ is bounded if 
$1<q<4\pi/[\lm|\bal| (1+\ts |\bal|)]$. 
The existence of such a $q>1$ follows from \eqref{lambdan}.
Moreover, since by assumption we have $\lm >{8\pi},$ we derive that necessarily 
\[
{8\pi}<\frac{4\pi}{|\bal| ( 1 +\ts |\bal|)}.
\]
This inequality holds in view of the assumption $\sigma\in(0,\sigma_\gamma)$. 
Therefore we have that  $1<4\pi/[\lm |\bal|(1 + \ts |\bal|)]$ and 
$\|e^{\bal u}\|_{L^q(\Omega)}$ is bounded for some $q>1$.
Hence, $\tilmm_{\bal}(p)=0$. Now \eqref{eq:quadid} implies $\tilmm_1(p)=8\pi$.
We decompose $u=w_1+w_2$, with
\begin{equation*}
\left\{
\begin{aligned}
-\Delta w_1=&\lm\frac{e^u}{\int_\Omega e^u}&&\hbox{in\ }\Omega\\
w_1=&0&&\hbox{on\ }\partial\Omega,
\end{aligned}
\right.
\end{equation*} 
and
\begin{equation*}
\left\{
\begin{aligned}
-\Delta w_2=&\ts \bal\lm\frac{e^{\bal u}}{\int_\Omega e^{\bal u}}&&\hbox{in\ }\Omega\\
w_2=&0&&\hbox{on\ }\partial\Omega.
\end{aligned}
\right.
\end{equation*} 
Then, setting
\[
\phi=\ts \bal\lm\frac{e^{\bal u}}{\int_\Omega e^{\bal u}}
\]
we have $\|\phi\|_{L^q(\Omega)}\le C$ for some $q>1$ and therefore $\|w_2\|_{L^\infty(\Omega)}\le C$.
It follows that $e^u=he^{w_1}$ with $h=e^{w_2}\ge e^{\inf_\Omega w_2}\ge e^{-C(\lm)}>0$.
Moreover,
\[
w_1\to G\ast ( \tilmm_1(p)\delta_p+\tilrr_1)=4\log\frac{1}{|x-p|}+\omega+ G\ast \tilrr_1
\]
with $\omega$ smooth in the closure of a neighbourhood $U$ of $p$.
Therefore, by Fatou's lemma:
\[
\liminf\int_\Omega e^u\ge\int_\Omega\liminf e^u\ge e^{-C(\lm)}\int_U e^\omega \frac{dx}{|x-p|^4}=+\infty.
\]
Hence $\tilrr_1\equiv0$ since $u $ is locally uniformly bounded in $\Omega\setminus \mathcal S$.
\par\noindent
Now, the first equation in \eqref{eq:mulimits} implies the mass quantization $\lm\in8\pi\mathbb N$.
\end{proof}
\section{Proof of Theorem~\ref{Intro:thm:existence}}
\label{sec:minimax}
In this section we prove Theorem~\ref{Intro:thm:existence} by suitably adapting a variational argument due to \cite{DJLW}.
The variational functional for Problem~\eqref{Intro:specialpb} is given by:
\begin{equation*}
J_\lm(u)=\frac{1}{2}\int_\Omega|\nabla u|^2\,dx-\lm\,\ln\int_\Omega e^u\,dx
-\lm\ts\,\ln\int_\Omega e^{\bal u}\,dx.
\end{equation*}
\subsection{An improved Moser-Trudinger inequality}
We observe that the standard well-known improved Moser-Trudinger inequality
\cite{ChLi} readily implies an improved inequality for $J_\lm$.
For any fixed $a_0,d_0>0$ we consider the set
\[
\mathcal A_{a_0,d_0}:=\left\{u\in H_0^1(\Omega):\ \exists \Omega_1,\Omega_2\subset\Omega\ \mathrm{s.t.}\
\mathrm{dist}(\Omega_1,\Omega_2)\ge d_0\ \hbox{and\ }
\frac{\int_{\Omega_i}e^u\,dx}{\int_\Omega e^u\,dx}\ge a_0,\ i=1,2\right\}.
\]
\begin{lemma}[Improved Moser-Trudinger Inequality]
\label{lem:IMT}
The functional~$J_\lm$ is bounded from below on $\mathcal A_{a_0,d_0}$
if
\begin{equation}\label{lam-ImpMT}
\lm<\frac{16\pi}{1+2\ts\bal^2}.
\end{equation}
\end{lemma}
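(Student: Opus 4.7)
The plan is to bound each of the two logarithmic terms of $J_\lm$ separately and then collect the coefficients in front of $\|\nabla u\|_{L^2}^2$.

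First, since $u\in\mathcal A_{a_0,d_0}$, the mass of $e^u$ splits into two well-separated pieces each carrying at least a fixed fraction $a_0$ of the total. This is precisely the hypothesis of the Chen--Li improved Moser--Trudinger inequality, which upgrades the constant $1/(16\pi)$ in the standard inequality to $1/(32\pi)$. Thus I would invoke the inequality
\[
\ln\int_\Omega e^{u}\,dx\le\frac{1}{32\pi}\int_\Omega|\nabla u|^2\,dx+C(a_0,d_0),
\]
valid for every $u\in\mathcal A_{a_0,d_0}$.

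Second, for the term involving $e^{\bal u}$, no splitting hypothesis is available (and none is assumed in the statement, since $\mathcal A_{a_0,d_0}$ only records information about $e^u$). Instead I would apply the classical Moser--Trudinger inequality to the function $\bal u\in H_0^1(\Omega)$, noting that $\|\nabla(\bal u)\|_{L^2}^2=\bal^2\|\nabla u\|_{L^2}^2$, which gives
\[
\ln\int_\Omega e^{\bal u}\,dx\le\frac{\bal^2}{16\pi}\int_\Omega|\nabla u|^2\,dx+C.
\]

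Third, I would substitute both estimates into the definition of $J_\lm$ to get
\[
J_\lm(u)\ge\frac{1}{2}\int_\Omega|\nabla u|^2\,dx-\frac{\lm}{32\pi}\int_\Omega|\nabla u|^2\,dx-\frac{\lm\ts\bal^2}{16\pi}\int_\Omega|\nabla u|^2\,dx-C,
\]
i.e.
\[
J_\lm(u)\ge\frac{16\pi-\lm(1+2\ts\bal^2)}{32\pi}\int_\Omega|\nabla u|^2\,dx-C.
\]
Under the assumption \eqref{lam-ImpMT} the coefficient in front of the Dirichlet energy is nonnegative, and the conclusion follows. There is no serious obstacle here: the nontrivial input is Chen--Li, and the only thing to check carefully is the algebra that produces the factor $1+2\ts\bal^2$. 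In particular, the improvement from $8\pi$ to $16\pi$ in the threshold for $\lm$ comes exclusively from the splitting assumption on $e^u$, while the contribution of the second nonlinearity simply shifts the threshold by the factor $1+2\ts\bal^2$.
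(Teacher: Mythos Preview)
Your argument is correct and follows essentially the same strategy as the paper: apply the Chen--Li improved inequality to control the $e^u$ term and the ordinary Moser--Trudinger inequality to control the $e^{\gamma u}$ term, then collect coefficients. The paper packages the same idea slightly differently, splitting the Dirichlet energy as $(1-\eps)+\eps$ and viewing $J_\lm$ as a sum of two standard mean-field functionals $K^1+K^\gamma$, each of which is bounded below under the conditions $\lm/(1-\eps)<16\pi$ and $\lm\ts\gamma^2/\eps\le 8\pi$; choosing $\eps$ compatible with both yields exactly \eqref{lam-ImpMT}. Your direct substitution avoids the auxiliary parameter and is a bit cleaner.

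One small caveat: the Chen--Li improvement is usually stated with an $\eps$-loss, i.e.\ for every $\eps>0$ there is $C_\eps$ with $\ln\int_\Omega e^u\le\frac{1}{32\pi-\eps}\int_\Omega|\nabla u|^2+C_\eps$ on $\mathcal A_{a_0,d_0}$, rather than with the sharp constant $1/(32\pi)$ you wrote. This does not affect your conclusion, since \eqref{lam-ImpMT} is a strict inequality and the slack can be absorbed; but you should phrase the inequality accordingly. The paper's $\eps$-splitting is in effect where that slack is hidden.
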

\begin{proof}
For $\eps\in(0,1)$ to be fixed later we decompose:
\begin{equation}\label{strateg}
\begin{split}
J_\lm(u)=&(1-\eps)\{\frac{1}{2}\int_\Omega|\nabla u|^2\,dx-\frac{\lm }{1-\eps}\ln\int_\Omega e^u\}\\
&\qquad+\frac{\eps}{\bal^2}\{\frac{1}{2}\int_\Omega|\nabla \bal u|^2\,dx-\frac{\lm \ts \bal^2}{\eps}\ln\int_\Omega e^{\bal u}\}\\
:=&K^1(u)+K^\bal(u)\\
\end{split}
\end{equation}
In view of the Improved Moser-Trudinger inequality, the functional~$K^1$ is bounded below on $\mathcal A_{a_0,d_0}$
if 
\begin{equation}
\label{eq:IMT1}
\frac{\lm}{1-\eps}<16\pi.
\end{equation}
On the other hand, the functional~$K^\bal$ is bounded below on $H_0^1(\Omega)$
if 
\begin{equation}
\label{eq:IMT2}
\frac{\lm\ts \bal^2}{\eps}\le 8\pi.
\end{equation}
Considering \eqref{eq:IMT1} and \eqref{eq:IMT2} we can take a suitable $\eps\in(0,1)$ satisfying
\[
\eps<1-\frac{\lm}{16\pi}
\qquad\hbox{and\ }\qquad\eps\ge\frac{\lm\ts \bal^2}{8\pi}
\]
if 
\[
\frac{\lm\ts \bal^2}{8\pi}<1-\frac{\lm}{16\pi}
\]
which is equivalent to \eqref{lam-ImpMT}.
\end{proof}
\begin{rmk}
Actually, we expect boundedness below of $J_\lm$ for all $\lm\in(8\pi,16\pi)$.
\end{rmk}
\begin{lemma} 
\label{lem:MT} 
For every 
$0<|\gamma|<1/2$ and for every $0<\sigma<(1-2|\gamma|)/(2\gamma^2)=\sigma_\gamma$, the functional
$J_\lm$ is bounded below on $H_0^1(\Omega)$ if and only if 
$\lm \leq {8\pi}$.
\end{lemma}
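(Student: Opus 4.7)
\noindent\emph{Plan for the proof.} I treat the two directions separately.

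For the ``only if'' direction ($J_\lambda$ bounded below $\Rightarrow \lambda\le 8\pi$), I construct a concentrating test function. Fix $x_0\in\Omega$ and take $u_\epsilon(x)=\log\frac{1}{(\epsilon^2+|x-x_0|^2)^2}$, suitably cut off to lie in $H_0^1(\Omega)$. Standard computations yield $\|\nabla u_\epsilon\|_2^2=32\pi\log(1/\epsilon)+O(1)$ and $\log\int_\Omega e^{u_\epsilon}=2\log(1/\epsilon)+O(1)$ as $\epsilon\to0^+$. The decisive point is that, under the hypothesis $|\gamma|<1/2$, the density $e^{\gamma u_\epsilon}\sim(\epsilon^2+|x-x_0|^2)^{-2\gamma}$ is uniformly integrable on $\Omega$, so $\log\int_\Omega e^{\gamma u_\epsilon}=O(1)$. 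Substituting gives
\[
J_\lambda(u_\epsilon)=(16\pi-2\lambda)\log(1/\epsilon)+O(1)\longrightarrow-\infty\quad\text{when }\lambda>8\pi.
\]

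For the ``if'' direction ($\lambda\le 8\pi\Rightarrow J_\lambda$ bounded below), I adapt the splitting used in the proof of Lemma~\ref{lem:IMT}, replacing the improved Moser--Trudinger inequality on $\mathcal A_{a_0,d_0}$ (threshold $16\pi$) by the standard one on $H_0^1(\Omega)$ (threshold $8\pi$). The decomposition
\[
J_\lambda(u)=(1-\epsilon)\Bigl\{\tfrac12\|\nabla u\|_2^2-\tfrac{\lambda}{1-\epsilon}\log\textstyle\int_\Omega e^u\Bigr\}+\tfrac{\epsilon}{\gamma^2}\Bigl\{\tfrac12\|\nabla(\gamma u)\|_2^2-\tfrac{\lambda\sigma\gamma^2}{\epsilon}\log\textstyle\int_\Omega e^{\gamma u}\Bigr\}
\]
reduces boundedness to the simultaneous constraints $\lambda/(1-\epsilon)\le 8\pi$ and $\lambda\sigma\gamma^2/\epsilon\le 8\pi$. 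I combine this with the H\"older estimate $\int e^{\gamma u}\le(\int e^{\mathrm{sgn}(\gamma)u})^{|\gamma|}|\Omega|^{1-|\gamma|}$, which substitutes $|\gamma|\log\int e^{\mathrm{sgn}(\gamma)u}$ for $\log\int e^{\gamma u}$ up to a constant, turning the second exponential into a lower-order perturbation of the first. The condition $\sigma<\sigma_\gamma=(1-2|\gamma|)/(2\gamma^2)$ is then the algebraic inequality that allows all the resulting coefficients to fit under the Moser--Trudinger threshold $8\pi$.

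The technical core, and the main obstacle, is obtaining the sharp constant $8\pi$ rather than the easier $8\pi/(1+\sigma\gamma^2)$ produced by the bare splitting. The underlying mechanism is that the two Moser--Trudinger inequalities are never simultaneously saturated: the standard bubble saturating $\log\int e^u$ (at scale $1$) leaves $\log\int e^{\gamma u}=O(1)$ under $|\gamma|<1/2$, precisely as in the necessity half, and a quantitative version of this dichotomy---enforced by the smallness of $\sigma$ from $\sigma<\sigma_\gamma$---is what elevates the bound to the sharp value $\lambda\le 8\pi$.
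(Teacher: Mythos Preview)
Your ``only if'' direction is correct and is a clean, self-contained alternative to what the paper does: the paper does not build a test function at all but simply rewrites $J_\lambda$ in the form $\tilde J_{\tilde\lambda}$ associated to the probability measure $\mathcal P_\gamma=\tau\delta_1+(1-\tau)\delta_\gamma$ and then invokes the sharp Trudinger--Moser result of \cite{RS} (Theorem~4), which characterizes boundedness from below of $J_{\tilde\lambda}^{\mathcal P}$ by the explicit threshold
\[
\bar\lambda^{\mathcal P}=8\pi\inf\Bigl\{\frac{\mathcal P(K_\pm)}{\bigl(\int_{K_\pm}\alpha\,\mathcal P(d\alpha)\bigr)^2}:\ K_\pm\subset I_\pm\cap\mathrm{supp}\,\mathcal P\Bigr\}.
\]
A short case check then shows $\tau\bar\lambda^{\mathcal P}=8\pi$ precisely when $0<\sigma<\sigma_\gamma$. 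So the paper obtains both implications in one stroke from \cite{RS}; your bubble computation recovers the ``only if'' half by elementary means, and your observation that $e^{\gamma u_\epsilon}\sim(\epsilon^2+|x-x_0|^2)^{-2\gamma}$ stays uniformly integrable exactly when $|\gamma|<1/2$ nicely explains why the threshold is $8\pi$ rather than something smaller.

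The ``if'' direction, however, has a genuine gap. Your splitting yields boundedness only for $\lambda\le 8\pi/(1+\sigma\gamma^2)$, and the H\"older step $\log\int e^{\gamma u}\le|\gamma|\log\int e^{\mathrm{sgn}(\gamma)u}+C$ improves this at best to $\lambda\le 8\pi/(1+\sigma|\gamma|)$, still strictly below $8\pi$ whenever $\sigma>0$. In the last paragraph you correctly identify the mechanism---the $e^u$ and $e^{\gamma u}$ terms cannot simultaneously saturate Moser--Trudinger---but you do not turn this into an inequality; ``a quantitative version of this dichotomy'' is exactly the missing argument, and it is not elementary. Getting the sharp constant here requires either the duality/optimal-transport machinery behind \cite{RS} (or \cite{ShafrirWolansky}), or an equivalent concentration--compactness argument tailored to the two-species functional. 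Without such an ingredient your proof does not cover the range $\lambda\in\bigl(8\pi/(1+\sigma\gamma^2),\,8\pi\bigr]$, and in particular misses the borderline case $\lambda=8\pi$, which is the delicate one.
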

\begin{proof}
We rewrite 
\begin{equation}
\label{eq:Jdef}
J_\lm (u)= \tilde{J}_{\tilde\lambda}(u)=\frac{1}{2}\int_\Omega|\nabla u|^2\,dx-{\tilde\lambda}\tau\,\ln\int_\Omega e^u\,dx
-{\tilde\lambda}(1-\tau)\ln\,\int_\Omega e^{\bal u}\,dx.
\end{equation}
where:
\[
\ts=\frac{1-\tau}{\tau}\qquad \mbox{ and }\qquad {\tilde\lambda}= \frac{\lm}{\tau}\qquad \tau\in(0,1].
\]
We use a result from \cite{RS} for the functionals of the form
\begin{equation*}
J_{\tilde\lambda}^{\mathcal P}(u)=\frac{1}{2}\int_\Omega|\nabla u|^2\,dx-{\tilde\lambda} \int_I \left( \log \int_\Omega e^{\alpha u}\,dx\right)\Pda,
\end{equation*}
$u\in H_0^1(\Omega)$.
Note that $J_{\tilde\lambda}^{\mathcal P}$ is the Euler-Lagrange funtional for problem~\eqref{Intro:generalpb}.
In view of Theorem~4 in \cite{RS} (see also \cite{ShafrirWolansky}) we have that $\tilde J_{\tilde\lambda}$ 
is bounded below if and only if 
${\tilde\lambda}\leq\bar \lambda^\mathcal P $ where
\[
\bar \lambda^\mathcal P  =8\pi\inf \left\{\frac{\mathcal P(K_\pm) }{(\int_{K_\pm} \alpha \Pda)^2} , K_\pm \subset I_\pm\cap\mathrm{supp}\mathcal P \right\},
\]	
$I_+=[0,1]$, $I_-=[-1,0)$
and $\mathcal P=\mathcal P_\gamma$ is defined by \eqref{Intro:specialP},
i.e., $\mathcal P_\gamma(d\alpha)=\tau\delta_1(d\al)+(1-\tau)\delta_{\bal}(d\al)$.
\par
Assume $\bal\geqslant 0$. In this case, we have
\begin{equation*}
\tau\frac{\mathcal P_\gamma(K) }{(\int_{K} \alpha\,\mathcal P_\gamma(d\alpha))^2} =\left\{
\begin{array}{ll}
1,
&\hbox{if\ }K=\{1\}\\
&\\
\frac{\tau}{{\bal^2 (1-\tau)}}=\frac{1}{\sigma\gamma^2},&\hbox{if\ }K=\{\bal\}\\
&\\
\frac{\tau}{(\tau +\bal(1-\tau))^2}=\frac{1+\sigma}{(1+\sigma\gamma)^2},&\hbox{if\ }K=\{\bal,1\}.\\
\end{array}
\right.
\end{equation*}
Hence, if $\gamma>0$ we have $\tau\bar \lambda^\mathcal P =8\pi$ whenever $0<\sigma\leqslant \frac{1-2\gamma}{2\gamma^2}=\sigma_\gamma$.
\\
Analogously,  for $\bal<0$ we have 
\begin{equation*}
\tau\frac{\mathcal P_\gamma(K) }{(\int_{K}\alpha\,\mathcal P_\gamma(d\alpha))^2}=
\left\{
\begin{array}{ll}
1,
&\hbox{if\ }K=\{1\}\\
&\\
\frac{\tau}{{\bal^2(1-\tau)}}=\frac{1}{\sigma\gamma^2},&\hbox{if\ }K=\{\bal\}.
\\
\end{array}
\right.
\end{equation*}
Hence, if $\gamma<0$ we have that $\tau\bar\lambda^\mathcal P=8\pi$ if $0<\sigma<\sigma_\gamma$. 
\end{proof}
\begin{lemma} 
\label{Lemma:alpha} 
Let $0<|\gamma|<1/2$ and let $0<\sigma<\sigma_\gamma$, where $\sigma_\gamma$ is defined in \eqref{def:gammasigma}.
Then, we have
${8\pi}<\barlm \leq {16\pi}$, where $\barlm$ is defined in \eqref{def:musg}.
\end{lemma}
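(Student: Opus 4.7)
The plan is to verify both inequalities by a direct algebraic check against the two quantities appearing in the definition \eqref{def:musg} of $\bar\lm_{\sigma,\gamma}$.

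\textbf{Upper bound $\bar\lm_{\sigma,\gamma}\le 16\pi$.} I would first note that since $\sigma>0$ and $\gamma^2>0$, we have $1+2\sigma\gamma^2\ge 1$, so the first term in the minimum satisfies $\frac{16\pi}{1+2\sigma\gamma^2}\le 16\pi$. Taking the minimum with the second term preserves this upper bound.

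\textbf{Lower bound $8\pi<\bar\lm_{\sigma,\gamma}$.} I would check that each of the two quantities in the minimum exceeds $8\pi$.
For the first quantity, the inequality $\frac{16\pi}{1+2\sigma\gamma^2}>8\pi$ is equivalent to $\sigma\gamma^2<\tfrac12$, i.e.\ $\sigma<\frac{1}{2\gamma^2}$. Since $0<|\gamma|<1/2$ implies $1-2|\gamma|<1$, we have
\[
\sigma_\gamma=\frac{1-2|\gamma|}{2\gamma^2}<\frac{1}{2\gamma^2},
\]
so the assumption $\sigma<\sigma_\gamma$ gives exactly what is needed.
For the second quantity, the inequality $\frac{4\pi}{|\gamma|(1+|\gamma|\sigma)}>8\pi$ is equivalent to $|\gamma|(1+|\gamma|\sigma)<\tfrac12$, which after rearranging becomes $\sigma<\frac{1-2|\gamma|}{2\gamma^2}=\sigma_\gamma$; this is precisely the standing hypothesis.

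Combining both observations, $\bar\lm_{\sigma,\gamma}$ lies strictly above $8\pi$ and at most at $16\pi$, as claimed. I do not foresee any real obstacle here: the statement reduces to elementary manipulations once the definition \eqref{def:gammasigma} of $\sigma_\gamma$ is recognized as exactly the threshold making the second argument of the minimum cross $8\pi$.
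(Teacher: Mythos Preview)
Your proof is correct and essentially identical to the paper's: both dispose of the upper bound immediately and then verify that each of the two quantities in the definition of $\lm_{\sigma,\gamma}$ exceeds $8\pi$ by reducing to the equivalent inequalities $\sigma<\frac{1}{2\gamma^2}$ and $\sigma<\sigma_\gamma$, the latter being the hypothesis and the former following from $\sigma_\gamma<\frac{1}{2\gamma^2}$.
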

\begin{proof}
The upper bound is clear. Therefore, we only  prove the lower bound  $\barlm>{8\pi}$. 
We readily check that
\begin{equation*}
\label{alpha1}
\frac{16 \pi}{1+ 2\ts \bal^2}>{8\pi} \qquad \mbox{if and only if}\qquad \sigma<\frac{1}{2\gamma^2}
\end{equation*}
and
\begin{equation*}
\label{alpha2}
\frac{4\pi}{|\bal| (1+\ts |\bal|)}>{8\pi} \qquad  \mbox{if and only if}\qquad 
\sigma<\sigma_\gamma=\frac{1-2|\gamma|}{2\gamma^2}<\frac{1}{2\gamma^2}.
\end{equation*}
The claim follows.
\end{proof}
For every $u\in H^1_0(\Om)$ we consider the measure:
\begin{align*}
\mu_u=\frac{e^{ u}}{\int_{\Om}e^{ u} dx}\,dx
\in\mathcal M(\Om)
\end{align*}
and the corresponding ``center of mass":
\begin{align*}
\cm(u)=\int_\Om x\,d\mu_u\in\Rd.
\end{align*}
\begin{lemma}
\label{baricentri}Let $\lm\in (8\pi, \frac{16\pi}{ 1+2\ts\bal^2})$  and let $\{u_n\}\subset H^1_0(\Om)$ 
be a sequence such that $J_\lm(u_n)\rightarrow -\infty$ and $\cm(\un)\to x_0\in \R^2. $
Then, $x_0\in\overline\Omega$ and 
\begin{equation*}
\mu_{\un}\rightharpoonup \delta_{x_0}\quad \mbox{weakly* in }\mathcal C(\bar \Om)'.
\end{equation*}
\end{lemma}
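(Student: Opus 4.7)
The plan is a compactness-plus-concentration argument pivoting on the Improved Moser--Trudinger inequality (Lemma~\ref{lem:IMT}). Since each $\mu_{\un}$ is a probability measure on the compact set $\bar\Om$, Prokhorov's theorem yields a subsequence (still indexed by $n$) and a probability measure $\mu$ on $\bar\Om$ such that $\mu_{\un}\rightharpoonup\mu$ weakly* in $\mathcal C(\bar\Om)'$. The goal is to show $\mu=\delta_{x_0}$.

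The heart of the argument is to rule out the possibility that $\mu$ fails to be a Dirac mass. Suppose $\mathrm{supp}(\mu)$ contains two distinct points $p_1\neq p_2$. Set $d_0:=\mathrm{dist}(p_1,p_2)/2$ and choose $\rho\in(0,d_0/2)$ so small that, by outer regularity, $\mu(B_\rho(p_i))\ge 2a_0$ for some $a_0>0$ and $i=1,2$. The Portmanteau theorem applied on open sets then gives $\liminf_{n\to\infty}\mu_{\un}(B_\rho(p_i))\ge 2a_0$. Since $\mu_{\un}$ is absolutely continuous with respect to Lebesgue measure, one has $\mu_{\un}(B_\rho(p_i)\cap\partial\Om)=0$, so the sets $\Omega_i:=B_\rho(p_i)\cap\Om$ satisfy $\mu_{\un}(\Omega_i)\ge a_0$ for all large $n$ and $\mathrm{dist}(\Omega_1,\Omega_2)\ge d_0>0$. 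Hence $\un\in\mathcal A_{a_0,d_0}$ eventually. Since the hypothesis $\lm<16\pi/(1+2\ts\bal^2)$ is exactly the condition required by Lemma~\ref{lem:IMT}, we conclude that $J_\lm(\un)$ is bounded from below along this subsequence, contradicting $J_\lm(\un)\to-\infty$.

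Therefore $\mu=\delta_p$ for some $p\in\bar\Om$. Testing the weak* convergence against the continuous function $x\mapsto x$ (componentwise) gives $\cm(\un)\to p$, and comparison with the hypothesis $\cm(\un)\to x_0$ forces $p=x_0\in\bar\Om$. Finally, since every subsequence of $\{\mu_{\un}\}$ admits, by the reasoning above, a further subsequence converging weakly* to the same limit $\delta_{x_0}$, a standard uniqueness-of-limit argument upgrades the convergence to the full sequence $\mu_{\un}\rightharpoonup\delta_{x_0}$.

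The main delicate point I anticipate concerns the possibility that $p_1$ or $p_2$ lies on $\partial\Om$: the sets required by the definition of $\mathcal A_{a_0,d_0}$ must be contained in $\Om$, not merely in $\bar\Om$. This is precisely where the absolute continuity of $\mu_{\un}$ comes in, since intersecting $B_\rho(p_i)$ with the open set $\Om$ discards a Lebesgue-null boundary portion and thus leaves the $\mu_{\un}$-mass untouched. Everything else is a routine application of weak* compactness of probability measures and of Lemma~\ref{lem:IMT}.
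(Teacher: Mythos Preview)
Your argument is correct. Both your proof and the paper's hinge on the Improved Moser--Trudinger inequality to force concentration to a single point, but the packaging differs. The paper works directly with the concentration function $\mathcal Q_n(r)=\sup_{x\in\Omega}\mu_{u_n}(B(x,r)\cap\Omega)$: it splits $J_\lambda=K^1+K^\gamma$ as in the proof of Lemma~\ref{lem:IMT}, observes that $K^\gamma$ is bounded below so that $K^1(u_n)\to-\infty$, and then applies the one-exponential improved inequality of Chen--Li to show $\mathcal Q_n(r)\to1$ for every $r>0$, which yields concentration without passing through weak* compactness. You instead extract a weak* limit via Prokhorov, rule out a non-Dirac limit by Portmanteau plus Lemma~\ref{lem:IMT} applied directly to $J_\lambda$, and close with a subsequence-uniqueness argument. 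Your route is slightly more abstract and cleaner in that it invokes Lemma~\ref{lem:IMT} as a black box rather than reopening its decomposition; the paper's route is more self-contained and avoids the Prokhorov/Portmanteau machinery. One cosmetic remark: the phrase ``by outer regularity'' is not what is used to obtain $\mu(B_\rho(p_i))>0$; that follows simply from $p_i\in\mathrm{supp}(\mu)$.
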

\begin{proof}
For every fixed $ r>0$ we denote by $\mathcal Q_n(r)$ the concentration function of $\mu_n$, i.e.
\[
\mathcal Q_n(r)= \sup_{x\in \Om} \int_{B(x,r)\cap \Om} \mu_n.
\]
For every $n$, there exists $\tilde x_n\in\overline\Om$ such that
\[
\mathcal Q_n(r/2) =\int_{B(\tilde x_n, r/2)\cap \Om} \mu_n.
\]
Upon taking a subsequence, we have that $\tilde x_n\to\tilde  x_0\in\overline\Om$.

Now, let us set
\[
\Om_1^n =B(\tilde x_n, r/2)\cap \Om\qquad \mbox{ and } \qquad \Om_2^n = \Om \setminus B(\tilde x_n,r),
\]
so that 
\[
\mathrm{dist}(\Om_1^n, \Om_2^n)\geqslant r/2.
\]  

Since $J_\lm (u_n) \rightarrow -\infty $ and since $\lm<\frac{16\pi}{ 1+2\ts\bal^2}$,
in view of \eqref{strateg} necessarily we have $K^1(u_n)\to-\infty$.
Therefore, in view of the standard Improved Moser-Trudinger inequality \cite{ChLi}, we conclude that
\[
\min\{\mu_n(\Om_1^n), \mu_n(\Om_2^n)\}\rightarrow0.
\]
In particular, $\min\{\mathcal Q_n (r/2),1-\mathcal Q_n(r)\}\leqslant\min\left( \mu_n(\Om_1^n), \mu_n(\Om_2^n)\right)\rightarrow0.$

On the other hand, for every fixed $r>0$ let $k_r\in\mathbb N$ be such that $\Om$ is covered
by $k_r$ balls of radius $r/2$. Then, $1=\mu_n(\Om)\leqslant k_rQ_n (r/2)$, so that $Q_n (r/2)\geqslant k_r^{-1}$
for every $n$. We conclude that necessarily $\mathcal Q_n(r)\to1$ 
as $n\to\infty$. Since $r>0$ is arbitrary, we derive in turn that
$1-\mathcal Q_n(r/2)=\mu_n(\Om\setminus B(\tilde x_n, r/2))\to0$
as $n\to\infty$. That is, $\mu_{\un}\rightharpoonup\delta_{\tilde  x_0}$.
It follows that $\cm(\un)=\int_\Om x\,d\mu_n\to \tilde x_0=x_0\in \bar\Om$,
as asserted.
\end{proof}
At this point, in order to prove Theorem~\ref{Intro:thm:existence}, we shall adapt a construction in \cite{DJLW}.
Let $\Gamma_1\subset\Om$ be a non-contractible curve 
which exists since $\Om$ is  non-simply connected. Let $\mathbb D =\{(r,\theta):0\leqslant r<1,\ 0\leqslant \theta <2\pi\}$ be the unit disc. 
Define
\begin{equation*}
\mathcal D_\lm:=\left\{
h\in C(\mathbb D,H_0^1(\Omega))\ \hbox{s.t.:}\qquad
\begin{aligned}
&(i)\qquad\lim_{r\to1}\sup_{\theta\in[0,2\pi)}J_\lm(h(r,\theta))=-\infty\\
&(ii)\qquad \cm(h(r,\theta))\ \hbox{can be extended continuously to  }\overline{\mathbb D}\\
&(iii)\qquad \cm(h(1,\cdot))\ \hbox{ is one-to-one }  \hbox { from }\partial\mathbb D\ \hbox{onto\ }\Gamma_1
\end{aligned}
\right\}
\end{equation*}
\begin{lemma}
\label{lemma-dom-1}
For every $\lm \in ({8\pi},{16\pi})$ the set $\mathcal D_\lm$ is non-empty.
\end{lemma}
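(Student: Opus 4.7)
The plan is to exhibit an explicit $h\in\mathcal D_\lm$ constructed from a family of standard concentrated ``bubble'' test functions centered along a parametrization of $\Gamma_1$. Since $\Gamma_1\subset\Omega$ is a non-contractible Jordan curve, there exists a homeomorphism $\psi:\partial\mathbb D\to\Gamma_1$; moreover $\Gamma_1$ is compact in $\Omega$ so there is $d>0$ with $\mathrm{dist}(\Gamma_1,\partial\Omega)\ge d$. For each $x\in\Gamma_1$ and $\eps\in(0,\eps_0)$, I would introduce the test function
\begin{equation*}
\varphi_{\eps,x}(y)=\log\frac{\eps^2}{(\eps^2+|y-x|^2)^2}-H_{\eps,x}(y),
\end{equation*}
where $H_{\eps,x}$ is the harmonic extension to $\Omega$ of the boundary trace of the first term, so that $\varphi_{\eps,x}\in H_0^1(\Omega)$. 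Since $x$ stays uniformly away from $\partial\Omega$, the correction $H_{\eps,x}$ is uniformly bounded in $C^1(\overline\Omega)$ as $(\eps,x)\to(0,x_0)$ for any $x_0\in\Gamma_1$, and depends continuously on $(\eps,x)$.

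A direct computation gives
\begin{equation*}
\tfrac12\int_\Omega|\nabla\varphi_{\eps,x}|^2\,dy=16\pi\log(1/\eps)+O(1),\qquad \log\int_\Omega e^{\varphi_{\eps,x}}\,dy=2\log(1/\eps)+O(1),
\end{equation*}
with the remainders uniform in $x\in\Gamma_1$. Crucially, since $|\gamma|<1/2$, the exponent $2|\gamma|<1$ makes $\int_\Omega(\eps^2+|y-x|^2)^{-2\gamma}\,dy$ (and its reciprocal, when $\gamma<0$) uniformly bounded, so that $\log\int_\Omega e^{\gamma\varphi_{\eps,x}}=O(1)$ uniformly. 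Combining these,
\begin{equation*}
J_\lm(\varphi_{\eps,x})=(16\pi-2\lm)\log(1/\eps)+O(1),
\end{equation*}
uniformly for $x\in\Gamma_1$, so the hypothesis $\lm>8\pi$ forces $J_\lm(\varphi_{\eps,x})\to-\infty$ uniformly as $\eps\to 0$. Moreover the concentration profile of $\varphi_{\eps,x}$ yields $\mu_{\varphi_{\eps,x}}\stackrel{*}{\rightharpoonup}\delta_x$ and hence $\cm(\varphi_{\eps,x})\to x$ uniformly in $x\in\Gamma_1$ as $\eps\to 0$.

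It then remains to package everything into an element of $\mathcal D_\lm$. Let $\chi\in C([0,1])$ with $\chi\equiv 0$ on $[0,1/4]$ and $\chi\equiv 1$ on $[1/2,1]$, and set
\begin{equation*}
h(r,\theta):=\chi(r)\,\varphi_{1-r,\,\psi(e^{i\theta})}\qquad\text{for }(r,\theta)\in\mathbb D.
\end{equation*}
Then $h\in C(\mathbb D,H_0^1(\Omega))$ (continuity in $r$ from the cutoff $\chi$ and from continuity of $\eps\mapsto\varphi_{\eps,x}$ in $H_0^1$ away from $\eps=0$, continuity in $\theta$ from continuity of $\psi$ and of $x\mapsto\varphi_{\eps,x}$). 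Property~(i) follows from the uniform blow-up estimate above. For (ii), the uniform weak-$*$ convergence $\mu_{h(r,\theta)}\to\delta_{\psi(e^{i\theta})}$ as $r\to 1$ shows that $\cm(h(r,\theta))\to\psi(e^{i\theta})$ uniformly in $\theta$, so $\cm(h)$ extends continuously to $\overline{\mathbb D}$ with $\cm(h(1,\theta))=\psi(e^{i\theta})$. Property (iii) is then immediate since $\psi$ is by construction a homeomorphism from $\partial\mathbb D$ onto $\Gamma_1$. The main technical obstacle is ensuring that all estimates are uniform in $x\in\Gamma_1$ and in the harmonic correction $H_{\eps,x}$; this reduces to standard but careful estimates on the Green's function of $\Omega$ near points of the compact set $\Gamma_1$, together with the crucial observation that the bounded exponent $2|\gamma|<1$ keeps the $\sigma$-contribution to $J_\lm$ of lower order compared to the principal bubble term.
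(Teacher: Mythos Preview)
Your proposal is correct and follows a standard route, but it differs from the paper's argument in the choice of test functions. The paper builds $h$ from \emph{compactly supported} truncated Green's functions: for each $\theta$ one places a radially symmetric profile $V_r(\cdot)$ inside a fixed ball $B(\gamma_1(\theta),\ee_0)\subset\Omega$ and sets it to zero outside. Because this $h(r,\theta)$ vanishes identically on $\Omega\setminus B(\gamma_1(\theta),\ee_0)$, one has $\int_\Omega e^{\gamma h}\ge|\Omega|-\pi\ee_0^2>0$ for \emph{every} $\gamma\in[-1,1)$, so the $\sigma$-term in $J_\lambda$ is bounded above regardless of the size of $|\gamma|$; the remaining verification of~(i)--(iii) is then identical to \cite{DJLW}. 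By contrast, your bubble construction $\varphi_{\eps,x}$ is not compactly supported, and you control $\log\int_\Omega e^{\gamma\varphi_{\eps,x}}$ via the integrability of $(\eps^2+|y-x|^2)^{-2\gamma}$, which genuinely requires $|\gamma|<1/2$. Since this hypothesis is in force throughout Section~\ref{sec:minimax}, your argument is valid here; the paper's construction is simply more robust (it proves the lemma for the full range $\gamma\in[-1,1)$) and avoids the harmonic correction $H_{\eps,x}$ altogether, at the cost of a slightly less explicit energy expansion.
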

\begin{proof}
Let $\gamma_1(\theta):[0,2\pi) \rightarrow \Gamma_1$ be a parametrization of $\Gamma_1$ and let 
$\ee_0>0$ be sufficiently small so that $B(\gamma_1(\theta),\ee_0)\subset \Om$. 
Let $\varphi_\theta(x)=\ee_0^{-1}(x-\gamma_1(\theta))$ so that $\varphi_\theta(B(\gamma_1(\theta),\ee_0))=B(0,1)$. 
We define ``truncated Green's function'':
\begin{equation*}
V_{r}(X)=\left\{
\begin{split}
&4\log\frac1{1-r} \qquad \mbox{ for }X\in B(0,1-r)\\
&4\log\frac1{|X|}\qquad \mbox{ for }X\in B(0,1)\setminus B(0,1-r)\\
\end{split}\right.
\end{equation*}
and
\begin{equation*}
v_{r,\theta}(x)=\left\{
\begin{split}
&0\qquad \mbox{ for }x\in \Om \setminus B(\gamma_1(\theta),\ee_0))\\
&V_{r}(\varphi_\theta(x))\quad \mbox{ for }x\in  B(\gamma_1(\theta),\ee_0).\\
\end{split}\right.
\end{equation*}
We set 
\begin{equation}
\label{def:h}
h(r,\theta)(x)=v_{r,\theta}(x),\ x\in\Om. 
\end{equation}
\par
The function $h$ defined in \eqref{def:h} satisfies $h\in \mathcal D_\lm$. 
To see that $h$ verifies the $(i)$-condition it is sufficient to note that 
\[
\int_\Om e^{\bal h}dx \geqslant |\Om|-\pi \ee_0>0.
\]
Then the claim follows by \cite{DJLW}. 
\end{proof}
%
%
%
Define
\begin{equation}\label{clambda}
c_\lm:=\inf_{h\in\mathcal D_\lm}\sup_{(r,\theta)\in\mathbb D} J_\lm(h(r,\theta)).
\end{equation}
We shall prove that $c_\lm$ defines a critical value for $J_\lm$ using the Struwe Monotonicity Trick contained in \cite{RicZecprep}, Proposition 4.1.
\par\noindent
In view of Lemma~\ref{lemma-dom-1}, we have $c_\lm<+\infty$. 
\begin{lemma}
\label{lemma-dom-2}
For any $\lm\in ({8\pi},\barlm)$, $c_\lm >-\infty.$
\end{lemma}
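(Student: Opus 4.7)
The plan is to argue by contradiction, extracting from a would-be minimizing sequence $\{h_n\}\subset\mathcal D_\lm$ with $s_n:=\sup_{(r,\theta)\in\mathbb D}J_\lm(h_n(r,\theta))\to-\infty$ a sequence of functions whose centers of mass sit at a fixed point outside $\overline\Omega$, and then invoking Lemma~\ref{baricentri} to derive a contradiction.

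The crux is a topological step. By the hypothesis on $\Omega$, the complement $\R^2\setminus\Omega$ has a bounded component $A$ containing an interior point $p^*\notin\overline\Omega$. In building $\mathcal D_\lm$ one may (and I would) choose the non-contractible curve $\Gamma_1\subset\Omega$ so as to encircle $A$, so that the winding number of $\Gamma_1$ about $p^*$ is nonzero. For every $h\in\mathcal D_\lm$, the map $F_h:=\cm\circ h$ extends continuously from $\mathbb D$ to $\overline{\mathbb D}$ and satisfies $F_h|_{\partial\mathbb D}=\Gamma_1$. If $p^*\notin F_h(\overline{\mathbb D})$, then $F_h$ would furnish a homotopy in $\R^2\setminus\{p^*\}$ from $\Gamma_1$ to a constant, contradicting the non-zero winding number. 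Hence $p^*\in F_h(\overline{\mathbb D})$, and since $F_h(\partial\mathbb D)=\Gamma_1\subset\Omega$ avoids $p^*$, the preimage in fact lies in the open disc $\mathbb D$.

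Applying this to each $h_n$ yields $(r_n,\theta_n)\in\mathbb D$ with $\cm(h_n(r_n,\theta_n))=p^*$. Setting $u_n:=h_n(r_n,\theta_n)$, I have both $J_\lm(u_n)\le s_n\to-\infty$ and $\cm(u_n)\equiv p^*$, so $\cm(u_n)\to p^*$. Lemma~\ref{baricentri} — whose hypothesis $\lm<16\pi/(1+2\ts\bal^2)$ is exactly the upper bound on $\barlm$ available here (see Lemma~\ref{Lemma:alpha}) — then forces the limit of the centers of mass to lie in $\overline\Omega$, contradicting $p^*\notin\overline\Omega$. This yields the sought contradiction and gives $c_\lm>-\infty$.

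The main obstacle is the topological step: one must choose $\Gamma_1$ so that it genuinely winds around a fixed interior point of the bounded component of $\R^2\setminus\Omega$, and must then make the winding-number obstruction for the extended center-of-mass map precise (a standard degree or homotopy argument). Everything else is then a direct consequence of Lemma~\ref{baricentri}, which has already been designed to rule out concentration at points outside $\overline\Omega$ along $J_\lm$-diverging sequences.
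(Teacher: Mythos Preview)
Your proof is correct and takes essentially the same approach as the paper: argue by contradiction, for each $h_n$ pick a point in $\mathbb D$ whose center of mass equals a fixed interior point of the bounded component of $\R^2\setminus\Omega$ enclosed by $\Gamma_1$, and invoke Lemma~\ref{baricentri} to force that point into $\overline\Omega$, a contradiction. You spell out the topological step (the winding-number/degree obstruction guaranteeing such a preimage exists) more explicitly than the paper, which simply asserts $\cm(h(\mathbb D))\supset B$ ``by continuity and the $(iii)$-property''; otherwise the two arguments are identical.
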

\begin{proof}
Denote by $B$ a bounded component of $\Rd\setminus \Om$ with at least an interior point and such that $\Gamma_1$ encloses $B$.
By continuity and by the $(iii)$-property defining 
$\mathcal D_\lm$, we have $\cm(h(\mathbb D))\supset B$ for all $h\in\mathcal D_\lm$.
By contradiction, assume  that $c_\lm=-\infty$. Then, there exists a sequence
$\{h_n\}\subset\mathcal D_\lm$ such that $\sup_{(r,\theta)\in \mathbb D} J_\lm(h_n(r,\theta))\rightarrow -\infty.$ 
Let $x_0$ be an interior point of $B$.  
For every $n$ we take $(r_n,\theta_n)\in \mathbb D$ such that $\cm(h_n(r_n,\theta_n))=x_0$.
In view of Lemma~\ref{baricentri}, it should be $x_0\in \stackrel{o}{B}\cap\overline\Om=\emptyset$,
a contradiction.
\end{proof}
At this point we set
\begin{equation}
\label{g0}
\mathcal G(u)=  \ln\int_{\Om }e^{ u}\,dx+ \ts \ln \int_\Om e^{\bal u}\,dx
\end{equation}
so that our functional \eqref{eq:Jdef} takes the form
\[
J_\lm (u)= \frac{1}{2}\int_{\Om }\left\vert \nabla u\right\vert
^{2}-\lm \mathcal G(u).
\] 
\begin{lemma}\label{lemma-dom-3}
For ${8\pi}<\lm_1 \leqslant \lm_2<{16\pi}$,  we have $\mathcal D_{\lm_1}\subseteq \mathcal D_{\lm_2}$.
\end{lemma}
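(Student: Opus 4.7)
The plan is to verify that any $h\in\mathcal D_{\lm_1}$ satisfies the three defining properties of $\mathcal D_{\lm_2}$. Conditions $(ii)$ and $(iii)$ in the definition involve only the center of mass $\cm(h(r,\theta))$, which is $\lm$-independent, so they transfer to $\mathcal D_{\lm_2}$ for free. The only substantial point is condition $(i)$, that is, to show
\[
\lim_{r\to 1}\sup_{\theta\in[0,2\pi)}J_{\lm_2}(h(r,\theta))=-\infty,
\]
given the corresponding property for $J_{\lm_1}$.

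The key identity, coming directly from \eqref{g0} and the formula $J_\lm(u)=\tfrac12\|\nabla u\|_2^2-\lm\mathcal G(u)$, is
\[
J_{\lm_2}(u)=J_{\lm_1}(u)-(\lm_2-\lm_1)\mathcal G(u),
\]
so the whole proof reduces to controlling the sign of $\mathcal G(h(r,\theta))$ in the regime $r\to 1$. First I would fix any $M>0$ and use the assumption on $h\in\mathcal D_{\lm_1}$ to pick $r_0<1$ such that $J_{\lm_1}(h(r,\theta))\le -M$ for every $r\in(r_0,1)$ and every $\theta$. Combining this with $\tfrac12\|\nabla h(r,\theta)\|_2^2\ge 0$ gives the uniform lower bound
\[
\mathcal G(h(r,\theta))\;\ge\;\frac{M}{\lm_1}>0
\qquad\text{for all }r\in(r_0,1),\ \theta\in[0,2\pi).
\]

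Plugging this back into the displayed identity and using $\lm_2-\lm_1\ge 0$ yields
\[
J_{\lm_2}(h(r,\theta))\;\le\;J_{\lm_1}(h(r,\theta))-(\lm_2-\lm_1)\frac{M}{\lm_1}
\;\le\;-M\,\frac{\lm_2}{\lm_1}
\]
for all $r\in(r_0,1)$ and $\theta$. Since $M$ is arbitrary, $\sup_\theta J_{\lm_2}(h(r,\theta))\to-\infty$ as $r\to 1$, which is exactly condition $(i)$ for $\lm_2$. Hence $h\in\mathcal D_{\lm_2}$, and the inclusion is proved. The only potentially delicate point is knowing that $\mathcal G(h(r,\theta))$ is eventually positive uniformly in $\theta$, but this is forced automatically by the blow-down of $J_{\lm_1}$ together with nonnegativity of the Dirichlet energy, so no extra argument is required.
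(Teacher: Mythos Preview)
Your proof is correct and follows essentially the same approach as the paper: both exploit that nonnegativity of the Dirichlet energy forces $\mathcal G(u)\ge 0$ whenever $J_{\lm_1}(u)\le 0$, which in turn gives $J_{\lm_2}(u)\le J_{\lm_1}(u)$ and hence condition~$(i)$ for $\lm_2$. You have simply written out the quantitative version of the paper's one-line observation.
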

\begin{proof}
It is sufficient to note that whenever  $J_\lm  (u)\leqslant 0$ it is $\mathcal G(u) \geqslant 0,$ with $\mathcal G$ given by \eqref{g0} and this implies that 
\[ 
 J_{\lm_1}(u)\geqslant J_{\lm_2}(u)  \qquad \mbox{ for } \quad {8\pi}<\lm_1 \leqslant \lm_2<{16\pi}\qquad \mbox{ if } J_{\lm_1} (u)\leqslant 0.
\]
Hence, $\mathcal D_{\lm_1}\subseteq \mathcal D_{\lm_2}$ for every ${8\pi}<\lm_1 \leqslant \lm_2<{16\pi}.$
\end{proof}
\begin{lemma}
\label{proprg-uno}
The function $\mathcal G:H_0^1(\Om)\rightarrow \R$ defined by \eqref{g0} satisfies:
\begin{itemize}
\item[1)]{} $\mathcal G\in\mathcal C^2 (H_0^1(\Om);\R)$ 
\item[2)] $\mathcal G'$ is\ compact
\item[3)] $\left \langle \mathcal{G''}(u)\fip,\fip\right \rangle\geqslant0$ for every $ u,\fip\in H^1_0(\Om),
$ where $\langle\cdot,\cdot\rangle$  is the $L^2$-inner product.
\end{itemize} 
\end{lemma}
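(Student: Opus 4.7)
Decompose $\mathcal G(u) = F_1(u) + \ts F_\gamma(u)$, where
\[
F_\alpha(u):=\ln\int_\Omega e^{\alpha u}\,dx,\qquad \alpha\in\{1,\gamma\}.
\]
Since $\ts>0$, it suffices to verify (1)--(3) for each $F_\alpha$ separately. The workhorse throughout is the Moser--Trudinger inequality: every $u\in H_0^1(\Omega)$ satisfies $e^{\alpha u}\in L^p(\Omega)$ for all $p\in[1,\infty)$, with a bound depending only on $\|\nabla u\|_2$ and $p$. From this I will extract the basic continuity fact that if $u_n\to u$ strongly in $H_0^1(\Omega)$, then $e^{\alpha u_n}\to e^{\alpha u}$ strongly in $L^p(\Omega)$ for every $p\in[1,\infty)$: a.e.\ convergence (along a subsequence via Rellich) plus uniform $L^q$--bounds for some $q>p$ gives equi-integrability, and Vitali concludes. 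The corresponding \emph{weak} statement, $u_n\rightharpoonup u$ in $H_0^1$ implies $e^{\alpha u_n}\to e^{\alpha u}$ strongly in $L^p$, follows identically.

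\textbf{Step 1 ($C^2$-regularity).} Compute the formal derivatives at $u\in H_0^1(\Omega)$:
\[
\langle F_\alpha'(u),\varphi\rangle=\alpha\int_\Omega \varphi\,d\mu_{\alpha,u},\qquad
\langle F_\alpha''(u)\varphi,\psi\rangle=\alpha^2\Bigl(\int_\Omega \varphi\psi\,d\mu_{\alpha,u}-\int_\Omega\varphi\,d\mu_{\alpha,u}\int_\Omega\psi\,d\mu_{\alpha,u}\Bigr),
\]
where $d\mu_{\alpha,u}:=e^{\alpha u}/\int_\Omega e^{\alpha u}\,dx$ is a probability measure. Hölder's inequality combined with $H_0^1(\Omega)\hookrightarrow L^q(\Omega)$ for all $q<\infty$ shows that these are continuous linear/bilinear forms on $H_0^1$. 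Continuity in $u$ of both expressions reduces to $L^p$--continuity of $u\mapsto e^{\alpha u}$, established above. A standard Taylor expansion with remainder controlled by the same $L^p$--estimates then shows that the formal derivatives are the true Fréchet derivatives, so $F_\alpha\in C^2(H_0^1(\Omega);\R)$.

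\textbf{Step 2 (Compactness of $\mathcal G'$).} Let $\{u_n\}\subset H_0^1(\Omega)$ be bounded; up to a subsequence $u_n\rightharpoonup u$ in $H_0^1$. By the weak version of the continuity statement, $e^{\alpha u_n}\to e^{\alpha u}$ in $L^p(\Omega)$ for every $p$, and the denominators $\int_\Omega e^{\alpha u_n}$ converge to $\int_\Omega e^{\alpha u}>0$. Hence the $L^p$--function representing $\mathcal G'(u_n)$,
\[
\frac{e^{u_n}}{\int_\Omega e^{u_n}}+\ts\bal\frac{e^{\bal u_n}}{\int_\Omega e^{\bal u_n}},
\]
converges strongly in $L^{p}(\Omega)$ for every $p<\infty$. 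Dualizing the compact embedding $H_0^1\hookrightarrow L^{p'}$ gives $L^p\hookrightarrow H^{-1}$ continuously, so $\mathcal G'(u_n)\to\mathcal G'(u)$ strongly in $H^{-1}(\Omega)$. This proves compactness.

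\textbf{Step 3 (Non-negativity of $\mathcal G''$).} Rewrite the diagonal of the second derivative as
\[
\langle F_\alpha''(u)\varphi,\varphi\rangle=\alpha^2\Bigl(\int_\Omega \varphi^2\,d\mu_{\alpha,u}-\Bigl(\int_\Omega\varphi\,d\mu_{\alpha,u}\Bigr)^2\Bigr)=\alpha^2\,\mathrm{Var}_{\mu_{\alpha,u}}(\varphi)\ge 0
\]
by the Cauchy--Schwarz inequality applied to the probability measure $\mu_{\alpha,u}$. Since $\ts>0$, summing the contributions from $\alpha=1$ and $\alpha=\gamma$ preserves the sign, giving $\langle\mathcal G''(u)\varphi,\varphi\rangle\ge 0$. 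The main (modest) obstacle is the continuity/compactness bookkeeping in Steps 1--2; everything reduces to the single fact that $u\mapsto e^{\alpha u}$ is continuous (resp.\ compact) from $H_0^1(\Omega)$ strong (resp.\ weak) to $L^p(\Omega)$ strong, which is the direct consequence of Moser--Trudinger.
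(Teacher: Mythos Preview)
Your proof is correct and follows essentially the same route as the paper: compute the first and second Fr\'echet derivatives explicitly, invoke the Moser--Trudinger embedding for compactness of $\mathcal G'$, and use the Cauchy--Schwarz (Schwarz) inequality with respect to the probability measures $d\mu_{\alpha,u}$ to obtain $\langle\mathcal G''(u)\varphi,\varphi\rangle\ge0$. The paper's argument is considerably terser---it simply writes down the derivatives and appeals to ``the compactness of the Moser--Trudinger embedding'' and ``the Schwarz inequality''---whereas you supply the details (Vitali, the $L^p$--continuity of $u\mapsto e^{\alpha u}$, the Taylor remainder) that justify the $C^2$--claim and the compactness; your variance formulation in Step~3 is exactly the paper's Schwarz inequality rewritten.
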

\begin{proof}
For every $u,\varphi\in H_0^1(\Om)$ we have:
\begin{equation*}
\mathcal G^{\prime}(u)\varphi= \frac{\int _\Om \varphi e^{u}dx}{\int_{\Om }e^{ u}dx}+ \ts \frac{\int _\Om \bal\varphi e^{\bal u}dx}{\int_{\Om }e^{ \bal u}dx}
\end{equation*}
and therefore the compactness of $\mathcal G'$ follows by the compactness of the Moser-Trudinger embedding. 
Moreover, for every $ u,\fip\in H^1_0(\Om) $ we have, using the Schwarz inequality,
\begin{equation*}
\begin{split}
\left\langle \mathcal G^{\prime\prime }(u)\varphi ,\varphi \right\rangle
&=\frac{1}{\left(\int_{\Om} e^{ u}dx \right)^2}\left[  \left(\int_{\Om} e^{ u}\fip^2  dx \right) \left(\int_{\Om} e^{ u}  dx \right)
-\left(\int_{\Om} e^{ u}\fip  dx \right)^2
\right]\\
&+\frac{\bal ^{2}\ts }{\left(\int_{\Om} e^{\bal  u}dx \right)^2} \left[ \left(\int_{\Om} e^{\bal u}\fip^2  dx \right) 
\left(\int_{\Om} e^{\bal u}  dx \right)
-\left(\int_{\Om} e^{\bal u}\fip  dx \right)^2
\right]\geqslant 0.
\end{split}
\end{equation*}
\end{proof}
Now we are able to prove the following.
\begin{prop}\label{prop:criticalvalueonOmega}
Let $\ts>0$ and assume that \eqref{def:gammasigma} holds. For almost every $\lm\in( {8\pi}, \barlm),$ $c_\lm>-\infty$ 
given by \eqref{clambda} is a saddle-type critical value for $J_\lm$.
\end{prop}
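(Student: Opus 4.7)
The plan is to apply a Struwe-type monotonicity scheme, in the form of \cite{RicZecprep}, Proposition~4.1, to produce a bounded Palais--Smale sequence at the level $c_\lm$, and then to upgrade this to strong convergence via the blow-up analysis of Section~\ref{sec:blowup}. First, Lemma~\ref{lemma-dom-3} shows that the family $\{\mathcal D_\lm\}$ is nested: $\mathcal D_{\lm_1}\subseteq\mathcal D_{\lm_2}$ for $8\pi<\lm_1\le\lm_2<16\pi$. Together with the structural properties recorded in Lemma~\ref{proprg-uno} --- namely that $\mathcal G\in C^2(H_0^1(\Omega);\R)$, that $\mathcal G'$ is compact, and that $\langle\mathcal G''(u)\fip,\fip\rangle\ge 0$ --- this yields exactly the hypotheses needed to invoke the monotonicity trick on the interval $(8\pi,\barlm)$. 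Combined with the bound $c_\lm>-\infty$ from Lemma~\ref{lemma-dom-2}, the function $\lm\mapsto c_\lm$ is real-valued and non-increasing, hence differentiable at almost every $\lm\in(8\pi,\barlm)$.

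At each differentiability point $\lm_\ast$, Struwe's trick supplies a bounded sequence $\{u_n\}\subset H_0^1(\Omega)$ with $J_{\lm_\ast}(u_n)\to c_{\lm_\ast}$ and $J_{\lm_\ast}'(u_n)\to 0$ in $H^{-1}(\Omega)$. Concretely, I would choose $\lm_n\uparrow\lm_\ast$, select nearly optimal paths $h_n\in\mathcal D_{\lm_n}\subseteq\mathcal D_{\lm_\ast}$, and sample $h_n$ near its maxima for $J_{\lm_n}$; the finite difference-quotient control $(c_{\lm_n}-c_{\lm_\ast})/(\lm_\ast-\lm_n)=O(1)$ produced by differentiability then bounds $\|u_n\|_{H_0^1}$, while a standard deformation argument forces $J_{\lm_\ast}'(u_n)\to 0$.

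To conclude I would upgrade weak to strong convergence by applying Propositions~\ref{prop:bdryblowupexcl} and~\ref{Prop:masszero} to $\{u_n\}$, regarded as a sequence of approximate solutions of~\eqref{Intro:specialpb} at parameter $\lm_\ast$. Proposition~\ref{prop:bdryblowupexcl} rules out boundary blow-up, and Proposition~\ref{Prop:masszero} shows that any interior blow-up would force $\lm_\ast\in 8\pi\mathbb N$; but by Lemma~\ref{Lemma:alpha} we have $\lm_\ast\in(8\pi,\barlm)\subset(8\pi,16\pi)$, so no such multiple of $8\pi$ can occur in the open interval. Hence $\{u_n\}$ is uniformly bounded, and the compactness of $\mathcal G'$ from Lemma~\ref{proprg-uno} yields a subsequence converging strongly in $H_0^1(\Omega)$ to a critical point $u_\ast$ with $J_{\lm_\ast}(u_\ast)=c_{\lm_\ast}$. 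The saddle-type character of $c_{\lm_\ast}$ is built into the min-max definition~\eqref{clambda}, since condition $(i)$ in $\mathcal D_\lm$ forces $J_{\lm_\ast}\to-\infty$ along the boundary $r\to 1$ of every admissible deformation, while $c_{\lm_\ast}>-\infty$ is strictly above that level. The main obstacle is the verification that the Struwe trick of \cite{RicZecprep} indeed applies to the asymmetric functional $J_\lm$ with its two exponential terms, and, once the bounded Palais--Smale sequence is obtained, the exclusion of concentration; the latter is where the smallness assumptions $0<|\gamma|<1/2$ and $0<\sigma<\sigma_\gamma$ enter decisively, through the strict upper bound $\barlm<16\pi$ of Lemma~\ref{Lemma:alpha}.
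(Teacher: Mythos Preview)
Your core route matches the paper's: assemble Lemmas~\ref{lemma-dom-1}, \ref{lemma-dom-2}, \ref{lemma-dom-3} and \ref{proprg-uno}, then invoke Struwe's monotonicity trick (as packaged in \cite{RicZecprep}, Proposition~4.1) to produce, for a.e.\ $\lm$, a bounded Palais--Smale sequence at level $c_\lm$. The paper's proof stops there and lets the compactness of $\mathcal G'$ do the rest.

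Where you diverge is in inserting the blow-up analysis of Section~\ref{sec:blowup} \emph{inside} the proof of this proposition. That step is both superfluous and misplaced. Once Struwe's trick hands you a \emph{bounded} Palais--Smale sequence $\{u_n\}$ in $H_0^1(\Omega)$, the compactness of $\mathcal G'$ alone upgrades weak to strong convergence: from $u_n-\lm_\ast(-\Delta)^{-1}\mathcal G'(u_n)\to 0$ and $\mathcal G'(u_n)\to\mathcal G'(u_\ast)$ strongly one gets $u_n\to u_\ast$ in $H_0^1(\Omega)$. No concentration argument is required, and in particular the restriction $\lm_\ast\notin 8\pi\mathbb N$ plays no role at this stage. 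In the paper, Propositions~\ref{prop:bdryblowupexcl} and \ref{Prop:masszero} are used only \emph{after} Proposition~\ref{prop:criticalvalueonOmega}, in the completion of Theorem~\ref{Intro:thm:existence}, to pass from the full-measure set of $\lm$'s to \emph{every} $\lm\in(8\pi,\barlm)$: there one takes a sequence of \emph{exact} solutions $u_n$ at parameters $\lm_n\to\lm_0$ and shows they cannot blow up.

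Your use of the blow-up propositions is also technically off as stated: Propositions~\ref{prop:bdryblowupexcl} and \ref{Prop:masszero} are proved for genuine solution sequences $(u_n,\lm_n)$, and their proofs (Brezis--Merle bounds, the quadratic identity \eqref{eq:quadid} via Pohozaev) do not carry over to approximate solutions without further argument. Since this detour is not needed, simply drop it and conclude directly from the compactness of $\mathcal G'$.
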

\begin{proof}[Proof of Proposition~\ref{prop:criticalvalueonOmega}]
In view of Lemma~\ref{proprg-uno}, Lemma~\ref{lemma-dom-1}, Lemma~\ref{lemma-dom-2} and Lemma~\ref{lemma-dom-3},
we may apply the well known Struwe's monotonicity trick to derive the existence of the desired critical value. 
See \cite{DJLW} or \cite{RicZecprep}, Proposition~4.1  
with $\mathcal H=H_0^1(\Om)$,
$V=\mathbb D$, $A=-\infty$ and  $\mathcal F_\lambda=\mathcal D_\lm$. 
\end{proof}
\begin{proof}[Proof of Theorem~\ref{Intro:thm:existence}]\textit{(Completion by blow-up results).} 
We fix $\lm_0 \in ({8\pi},\barlm)$. In view of Proposition \ref{prop:criticalvalueonOmega} 
there exists $\lm_n\to\lm_0$ such that problem \eqref{Intro:specialpb} with $\lm=\lm_n$ admits 
a solution $u_n.$ By the blow up analysis as stated in Proposition \ref{Prop:masszero},  
we have the compactness of solution sequences. Therefore, up to subsequences, 
we obtain that $u_n\to u_0$  with $u_0$ a solution to \eqref{Intro:specialpb} with $\lm=\lm_0$.
\end{proof}

\end{document}